\documentclass[a4paper,11pt,fleqn]{article}
\usepackage{pstricks}
\usepackage{amsmath}
\usepackage{amssymb}
\usepackage{mathrsfs} 
\usepackage{theorem} 
\usepackage{euscript}
\usepackage{exscale,relsize}
\usepackage{graphicx}
\usepackage{srcltx}
\usepackage{xcolor}
\usepackage{charter}
\usepackage{url}
\topmargin     -0.0cm
\oddsidemargin -0.0cm
\textwidth     16.7cm 
\headheight     0.0cm
\textheight    21.6cm
\parindent        6mm
\parskip          9pt
\tolerance 1000
\PassOptionsToPackage{normalem}{ulem}
\usepackage{ulem}

\renewcommand{\leq}{\ensuremath{\leqslant}}
\renewcommand{\geq}{\ensuremath{\geqslant}}

\newcommand{\pair}[2]{\langle{{#1},{#2}}\rangle} 
\newcommand{\Pair}[2]{\big\langle{{#1},{#2}}\big\rangle}

\newcommand{\menge}[2]{\big\{{#1}~\big |~{#2}\big\}}

\newcommand{\IDD}{\ensuremath{\text{\rm int\:dom}f}}

\newcommand{\IDDS}{\ensuremath{\text{\rm int\:dom}f^*}}

\newcommand{\RX}{\ensuremath{\left]-\infty,+\infty\right]}}

\newcommand{\XX}{\ensuremath{{\mathcal X}}}
\newcommand{\HH}{\ensuremath{{\mathcal H}}}

\newcommand{\emp}{\ensuremath{{\varnothing}}}
\newcommand{\prox}{\ensuremath{\text{\rm prox}}}

\newcommand{\Id}{\ensuremath{\operatorname{Id}}\,}

\newcommand{\RR}{\ensuremath{\mathbb{R}}}

\newcommand{\RP}{\ensuremath{\left[0,+\infty\right[}}

\newcommand{\RPP}{\ensuremath{\left]0,+\infty\right[}}
\newcommand{\NN}{\ensuremath{\mathbb N}}

\newcommand{\weakly}{\ensuremath{\:\rightharpoonup\:}}
\newcommand{\exi}{\ensuremath{\exists\,}}
\newcommand{\pinf}{\ensuremath{{+\infty}}}

\newcommand{\dom}{\ensuremath{\text{\rm dom}\,}}

\newcommand{\Cart}{\ensuremath{\raisebox{-0.5mm}{\mbox{\huge{$\times$}}}}}

\newcommand{\inte}{\ensuremath{\text{\rm int}\,}}

\newcommand{\ran}{\ensuremath{\text{\rm ran}\,}}

\newcommand{\gra}{\ensuremath{\text{\rm gra}\,}}

\newcommand{\argmin}{\ensuremath{\text{\rm argmin}\,}}

\newcommand{\Fix}{\ensuremath{\text{\rm Fix}\,}}

\newcommand{\Argmin}{\ensuremath{\text{\rm Argmin}\,}}

\newcommand{\BP}{\ensuremath{\EuScript P}}

\newcommand{\BF}{\ensuremath{\EuScript F}}
\newcommand{\BFS}{\ensuremath{\EuScript S}}

\newtheorem{theorem}{Theorem}[section]
\newtheorem{lemma}[theorem]{Lemma}
\newtheorem{corollary}[theorem]{Corollary}
\newtheorem{proposition}[theorem]{Proposition}

\theoremstyle{plain}{\theorembodyfont{\rmfamily}%
}
\theoremstyle{plain}{\theorembodyfont{\rmfamily}%
\newtheorem{example}[theorem]{Example}}
\theoremstyle{plain}{\theorembodyfont{\rmfamily}%
\newtheorem{remark}[theorem]{Remark}}
\theoremstyle{plain}{\theorembodyfont{\rmfamily}%
}
\theoremstyle{plain}{\theorembodyfont{\rmfamily}%
\newtheorem{condition}[theorem]{Condition}}
\theoremstyle{plain}{\theorembodyfont{\rmfamily}%
\newtheorem{definition}[theorem]{Definition}}
\theoremstyle{plain}{\theorembodyfont{\rmfamily}
}
\theoremstyle{plain}{\theorembodyfont{\rmfamily}
}

\numberwithin{equation}{section}
\setlength{\itemsep}{1pt} 

\begin{document}

\title{\sffamily\LARGE Variable Quasi-Bregman Monotone Sequences}

\author{Quang Van Nguyen\\[5mm]
\small
\small Sorbonne Universit\'es -- UPMC Univ. Paris 06\\
\small UMR 7598, Laboratoire Jacques-Louis Lions\\
\small F-75005 Paris, France\\
\small {\ttfamily quangnv@ljll.math.upmc.fr}
}
\date{\sffamily ~}
\maketitle
\setcounter{page}{1}

\vskip 8mm

\begin{abstract}
We introduce a notion of variable quasi-Bregman monotone sequence 
which unifies the notion of variable metric quasi-Fej\'er monotone 
sequences and that of Bregman monotone sequences. The results 
are applied to analyze the asymptotic behavior of proximal
iterations based on variable Bregman distance and of algorithms for
solving convex feasibility problems in reflexive real Banach spaces. 
\end{abstract}

{\bfseries Key words.}
Banach space,
Bregman distance, 
Bregman projection, 
convex feasibility problem,
Fej\'er monotone sequence,
Legendre function,
proximal iterations
\newpage

\section{Introduction}
\noindent 
The concept of Fej\'er monotonicity and its variants plays an 
important role in the convergence analysis of many fixed point 
and optimization algorithms in Hilbert spaces 
\cite{IIBaus96,IIBC11_B,IIElse01,IIEncy09,IIErmo71,IIRaik67}. A 
recent development in this area is the extension of the notion 
of (quasi)-Fej\'er sequence to the case when the underlying 
metric is allowed to vary over the iterations \cite{IICV13b}. 
Since Fej\'er 
monotonicity is of limited use outside of Hilbert spaces, the 
notion of Bregman monotonicity was introduced in \cite{IIBBC03} 
to provide a unifying framework for the convergence analysis of 
various algorithms for solving nonlinear problems. The main 
objective of the present paper is to unify the 
work of \cite{IICV13b} on variable metric Fej\'er sequences and 
that of \cite{IIBBC03} on Bregman monotone sequences by 
introducing the notion of a variable quasi-Bregman monotone 
sequence and by investigating its asymptotic properties. 
We apply these results to a variable Bregman proximal 
point algorithm and to convex feasibility problems in Banach spaces. 
Our paper revolves around the following definitions.

\begin{definition}{\rm\cite{IIBBC01,IIBBC03}}
Let $\XX$ be a reflexive real Banach space, 
let $\XX^*$ be the topological dual space of $\XX$, 
let $\pair{\cdot}{\cdot}$ be the duality pairing between 
$\XX$ and $\XX^*$, let $f\colon\XX\to\RX$ be a 
lower semicontinuous convex function that is 
G\^ateaux differentiable on $\IDD\neq\emp$, 
let $f^*\colon\XX^*\to\RX\colon
x^*\mapsto\sup_{x\in\XX}(\pair{x}{x^*}-f(x))$ be 
conjugate of $f$, and let 
\begin{equation}
\label{IIe:subdiff}
\partial f\colon\XX\to 2^{\XX^*}\colon x\mapsto
\menge{x^*\in\XX^*}{(\forall y\in\XX)\,
\pair{y-x}{x^*}+f(x)\leq f(y)},  
\end{equation}
be Moreau subdifferential of $f$.
The \emph{Bregman distance} associated with $f$ is 
\begin{equation}
\label{IIe:Bdist}
\begin{aligned}
D^f\colon\XX\times\XX&\to\,\left[0,\pinf\right]\\
(x,y)&\mapsto 
\begin{cases}
f(x)-f(y)-\pair{x-y}{\nabla f(y)},&\text{if}\;\;y\in\IDD;\\
\pinf,&\text{otherwise}.
\end{cases}
\end{aligned}
\end{equation}
In addition, $f$ is a \emph{Legendre function} if it is 
\emph{essentially smooth} in the sense that $\partial f$ is 
both locally bounded and single-valued on its
domain, and \emph{essentially strictly convex} in the sense
that $\partial f^*$ is locally bounded on its domain and 
$f$ is strictly convex on every convex subset of $\dom\partial f$.
Let $\varphi\colon\XX\to\RX$ be a lower semicontinuous 
convex function which is bounded from below 
and $\dom\varphi\cap\IDD\neq\emp$. 
The \emph{$D^f$-proximal operator} of $\varphi$ is
\begin{equation}
\begin{aligned}
\label{IIe:dprox}
\prox_{\varphi}^f\colon\IDD&\to\dom\varphi\cap\IDD\\
y&\mapsto\underset{x\in\XX}{\text{argmin}}\,\varphi(x)+D^f(x,y).
\end{aligned}
\end{equation}
Let $C$ be a closed convex subset of $\XX$ such that
$C\cap\IDD\neq\emp$. The \emph{Bregman projector} onto 
$C$ induced by $f$ is 
\begin{equation}
\label{IIe:2001}
\begin{aligned}
P^f_C\colon\IDD&\to C\cap\IDD\\
y&\mapsto\underset{x\in C}{\text{argmin}}\,D^f(x,y),
\end{aligned}
\end{equation}
and the \emph{$D^f$-distance} 
to $C$ is the function
\begin{equation}
\begin{aligned}
D^f_C\colon\XX&\to\left[0,+\infty\right]\\
y&\mapsto\inf D^f(C,y).
\end{aligned}
\end{equation}
\end{definition}

The paper is organized as follows. In Section~\ref{IIsec:VBMS}, 
we introduce the notion of a variable quasi-Bregman monotone 
sequence and investigate its asymptotic properties. Basic results 
on $D^f$-proximal operators are reviewed in 
Section~\ref{IIsec:VBPA}. Applications to 
a variable Bregman proximal point algorithm 
and to the convex feasibility problem are considered 
in Section~\ref{IIsec:app}.

\noindent {\bf Notation and background.}
The norm of a Banach space is denoted by $\|\cdot\|$. 
The symbols $\weakly$ and $\to$ represent respectively weak and 
strong convergence. The set of weak sequential cluster points 
of a sequence $(x_n)_{n\in\NN}$ is denoted by 
$\mathfrak{W}(x_n)_{n\in\NN}$. Let $M\colon\XX\to 2^{\XX}$. 
The domain of $M$ is $\dom M=\menge{x\in\XX}{Mx\neq\emp}$, 
the range of $M$ is 
$\ran M=\menge{y\in\XX}{(\exi x\in\XX)\;y\in Mx}$, 
the graph of $M$ is 
$\gra M=\menge{(x,y)\in\XX\times\XX}{y\in Mx}$, 
and the set of fixed points of $M$ is 
$\Fix M=\menge{x\in\XX}{x\in Mx}$. 
A function $f\colon\XX\to\RX$ is coercive if 
$\lim_{\|x\|\to+\infty}f(x)=+\infty$. Denote by 
$\Gamma_0(\XX)$ the class of all lower 
semicontinuous convex functions $f\colon\XX\to\RX$ such that 
$\dom f=\menge{x\in\XX}{f(x)<+\infty}\neq\emp$. 
Let $f\in\Gamma_0(\XX)$. The set of global minimizers of 
a function $f$ is denoted by $\Argmin f$. 
In addition, if $f$ is G\^ateaux differentiable 
on $\IDD\neq\emp$ then
\begin{equation}
\begin{aligned}
\hat{f}\colon\XX&\to\RX\\
x&\mapsto
\begin{cases}
f(x),&\text{if}\;x\in\IDD;\\
\pinf,&\text{otherwise}.
\end{cases}
\end{aligned}
\end{equation}
Finally, $\ell_{+}^1(\NN)$ is the set of all 
summable sequences in $\left[0,+\infty\right[$.

\section{Variable Bregman monotonicity}
\label{IIsec:VBMS}

\begin{definition}
\label{IIdef:alpha}
Let $\XX$ be a reflexive real Banach space 
and let $f\in\Gamma_0(\XX)$ be G\^ateaux differentiable on 
$\IDD\neq\emp$. Then
\begin{equation}
\BF(f)=\menge{g\in\Gamma_0(\XX)}{g\;\text{is G\^ateaux 
differentiable on}\;\dom g=\IDD}.
\end{equation}
Moreover, if $g_1$ and $g_2$ are in $\BF(f)$, then
\begin{equation}
g_1\succcurlyeq g_2\quad\Leftrightarrow\quad
(\forall x\in\dom f)(\forall y\in\IDD)
\quad D^{g_1}(x,y)\geq D^{g_2}(x,y).
\end{equation}
For every $\alpha\in\RP$, set
\begin{equation}
\BP_{\alpha}(f)=\menge{g\in\BF(f)}{g\succcurlyeq\alpha f}.
\end{equation}
\end{definition}

\begin{remark}
In Definition~\ref{IIdef:alpha}, 
suppose that $\XX$ is a Hilbert space 
and let $\alpha\in\RPP$. Then the following hold:
\begin{enumerate}
\item
\label{IIle:1i}
Suppose that $f$ is Fr\'echet differentiable on $\XX$. Then 
$\|\cdot\|^2/2\in\BP_{\alpha}(f)$ if and only if 
$\nabla f$ is $\alpha^{-1}$-Lipschitz continuous.
\item
\label{IIle:1ii}
Let $\BFS(\XX)$ 
be the space of self-adjoint bounded linear operators from 
$\XX$ to $\XX$. The Loewner partial ordering on $\BFS(\XX)$ 
is defined by 
\begin{equation}
(\forall U_1\in\BFS(\XX))(\forall U_2\in\BFS(\XX))
\quad U_1\succcurlyeq U_2\quad\Leftrightarrow\quad
(\forall x\in\XX)\quad\Pair{x}{U_1x}\geq\Pair{x}{U_2x}.
\end{equation}
Set $\BP_{\alpha}(\XX)=\menge{U\in\BFS(\XX)}{U\succcurlyeq\alpha\Id}$. 
Let $U\in\BFS(\XX)$ and $V\in\BFS(\XX)$ be such that 
$V\succcurlyeq\alpha U$. Suppose that $f\colon x\mapsto\pair{x}{Ux}/2$ 
and $g\colon x\mapsto\pair{x}{Vx}/2$. 
Then $g\in\BP_{\alpha}(f)$.
\end{enumerate}
\end{remark}
\begin{proof}
\ref{IIle:1i}:
First, since $f$ is Fr\'echet differentiable, 
$\partial f=\nabla f$ \cite[Proposition~17.26]{IIBC11_B} 
and hence, by \cite[Corollary~16.24]{IIBC11_B}, 
$(\nabla f)^{-1}=(\partial f)^{-1}=\partial f^*$. 
Now, we have 
\begin{align}
\label{IIe:1}
\|\cdot\|^2/2&\in\BP_{\alpha}(f)
\Leftrightarrow\;(\forall x\in\XX)(\forall y\in\XX)\;
\|x-y\|^2/2\geq\alpha D^f(x,y)\nonumber\\
&\Leftrightarrow\;(\forall x\in\XX)(\forall y\in\XX)\;
\|x-y\|^2/(2\alpha)\geq f(x)-f(y)-\pair{x-y}{\nabla f(y)}
\nonumber\\
&\Leftrightarrow\;(\forall x\in\XX)(\forall y\in\XX)\;
f(x)\leq f(y)+\pair{x-y}{\nabla f(y)}+\|x-y\|^2/(2\alpha).
\end{align}
The assertion therefore follows by invoking 
\cite[Theorem~18.15]{IIBC11_B}.

\ref{IIle:1ii}:
We observe that $f$ and $g$ are G\^ateaux 
differentiable on $\XX$ 
with $\nabla f=U$ and $\nabla g=V$. Consequently,
\begin{align}
(\forall x\in\XX)(\forall y\in\XX)\quad 
D^g(x,y)&=\pair{x}{Vx}/2-\pair{y}{Vy}/2
-\pair{x-y}{Vy}\nonumber\\
&=\pair{x-y}{Vx-Vy}/2\nonumber\\
&\geq\alpha\pair{x-y}{Ux-Uy}/2\nonumber\\
&=\alpha D^f(x,y).
\end{align}
\end{proof}

\begin{example}
Let $\XX$ be a reflexive real Banach space, 
let $f\in\Gamma_0(\XX)$ be G\^ateaux differentiable 
on $\IDD\neq\emp$, let $\alpha\in\RP$, 
and let $g\in\Gamma_0(\XX)$ be G\^ateaux differentiable 
on $\dom g=\IDD$. 
Suppose that and $g-\alpha f$ is convex 
(which means that $g$ is more convex than $\alpha f$ 
in the terminology of J. J. Moreau \cite{IIMo65}). 
Then $g\in\BP_{\alpha}(f)$.
\end{example}
\begin{proof}
We first note that $\dom h=\IDD$. Since 
$f$ and $g$ are G\^ateaux differentiable on 
$\IDD$ by \cite[Proposition~3.3]{IIPhelps}, $h=g-\alpha f$ is 
likewise. Furthermore, 
\begin{equation}
(\forall x\in\dom f)(\forall y\in\IDD)\quad
D^g(x,y)-\alpha D^f(x,y)=D^h(x,y)\geq 0.
\end{equation}
\end{proof}

The following definition brings together the notions of Bregman 
monotone sequences \cite{IIBBC03} and of variable metric 
Fej\'er monotone sequences \cite{IICV13b}.

\begin{definition}
\label{IIdef:1}
Let $\XX$ be a reflexive real Banach space, 
let $f\in\Gamma_0(\XX)$ be G\^ateaux differentiable on 
$\IDD\neq\emp$, let $(f_n)_{n\in\NN}$ 
be in $\BF(f)$, let $(x_n)_{n\in\NN}\in(\IDD)^{\NN}$, 
and let $C\subset\XX$ be such that 
$C\cap\dom f\neq\emp$. Then $(x_n)_{n\in\mathbb{N}}$ is: 
\begin{enumerate} 
\item
\emph{quasi-Bregman monotone} with respect to $C$ relative to
$(f_n)_{n\in\mathbb{N}}$ if 
\begin{multline}
\label{IIe:qbm}
(\exists(\eta_n)_{n\in\mathbb{N}}\in\ell_+^1(\mathbb{N}))
(\forall x\in C\cap\dom f)
(\exists(\varepsilon_n)_{n\in\mathbb{N}}\in
\ell_+^1(\mathbb{N}))(\forall n\in\mathbb{N})\\ 
D^{f_{n+1}}(x,x_{n+1})\leq
(1+\eta_n)D^{f_n}(x,x_n)+\varepsilon_n;
\end{multline}
\item 
\emph{stationarily quasi-Bregman monotone} with respect to $C$
relative to $(f_n)_{n\in\mathbb{N}}$ if
\begin{multline}
\label{IIe:sqbm}
(\exists(\varepsilon_n)_{n\in\mathbb{N}}\in\ell_+^1(\mathbb{N}))
(\exists(\eta_n)_{n\in\mathbb{N}}\in\ell_+^1(\mathbb{N}))(\forall
x\in C\cap\dom f)(\forall n\in\mathbb{N})\\
D^{f_{n+1}}(x,x_{n+1})\leq (1+\eta_n)D^{f_n}(x,x_n)+\varepsilon_n.
\end{multline}
\end{enumerate}
\end{definition}

\begin{remark}\
\label{IIrm:1}
\begin{enumerate}
\item 
In Definition~\ref{IIdef:1}, suppose that $(\forall n\in\NN)$ 
$f_n=\hat{f}$ and $\eta_n=\varepsilon_n=0$. 
Then we recover the notion of a Bregman monotone sequence 
defined in \cite{IIBBC03}.
\item
\label{IIrm:i}
In Definition~\ref{IIdef:1}, suppose that $\XX$ is a Hilbert 
space, that $f=\|\cdot\|^2/2$, and that $(\forall n\in\NN)$ 
$f_n\colon x\mapsto\pair{x}{U_nx}/2$, where $(U_n)_{n\in\NN}$ 
are operators in $\BP_{\alpha}(\XX)$ for some 
$\alpha\in\left[0,+\infty\right[$. 
Then we recover \cite[Definition~2.1]{IICV13b} 
with $\phi=|\cdot|^2/2$. 
\end{enumerate}
\end{remark}

Here are some basic properties of quasi-Bregman monotone sequences.

\begin{proposition}
\label{IIpp:qb1}
Let $\XX$ be a reflexive real Banach space, 
let $f\in\Gamma_0(\XX)$ be G\^ateaux differentiable on 
$\IDD\neq\emp$, let $\alpha\in\RPP$, 
let $(f_n)_{n\in\NN}$ be in $\BP_{\alpha}(f)$, let 
$(x_n)_{n\in\NN}\in(\IDD)^{\NN}$, let 
$C\subset\XX$ be such that $C\cap\IDD\neq\emp$, 
and let $x\in C\cap\IDD$. Suppose that 
$(x_n)_{n\in\mathbb{N}}$ is quasi-Bregman monotone with 
respect to $C$ relative to $(f_n)_{n\in\mathbb{N}}$. 
Then the following hold:
\begin{enumerate}
\item
\label{IIpp:qb1i}
$(D^{f_n}(x,x_n))_{n\in\NN}$ converges.
\item
\label{IIpp:qb1ii}
Suppose that $D^f(x,\cdot)$ is coercive. Then 
$(x_n)_{n\in\mathbb{N}}$ is bounded.
\end{enumerate}
\end{proposition}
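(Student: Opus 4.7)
The plan is to exploit the definition of quasi-Bregman monotonicity together with the order relation $f_n \succcurlyeq \alpha f$ to reduce everything to two standard tools: a classical lemma on quasi-monotone nonnegative real sequences for part (i), and coercivity for part (ii).

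For (i), I would fix the point $x \in C \cap \IDD \subset C \cap \dom f$ and apply the definition of quasi-Bregman monotonicity to produce sequences $(\eta_n)_{n\in\NN}, (\varepsilon_n)_{n\in\NN} \in \ell^1_+(\NN)$ such that
\begin{equation}
(\forall n \in \NN) \quad D^{f_{n+1}}(x, x_{n+1}) \leq (1 + \eta_n) D^{f_n}(x, x_n) + \varepsilon_n.
\end{equation}
Since each $D^{f_n}(x, x_n) \in [0, +\infty[$ (as $x \in \dom f$ and $x_n \in \IDD = \dom f_n$), the real sequence $(a_n)_{n\in\NN} = (D^{f_n}(x, x_n))_{n\in\NN}$ satisfies $a_{n+1} \leq (1+\eta_n) a_n + \varepsilon_n$ with summable $(\eta_n)$ and $(\varepsilon_n)$. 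A standard quasi-Fej\'er-type lemma (for instance, the scalar version of the Robbins--Siegmund lemma, already invoked in \cite{IICV13b} and \cite{IIBBC03} in analogous arguments) then guarantees that $(a_n)_{n\in\NN}$ converges in $\RP$.

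For (ii), I would use the containment $(f_n)_{n\in\NN} \subset \BP_\alpha(f)$, which by Definition~\ref{IIdef:alpha} yields
\begin{equation}
(\forall n \in \NN) \quad D^{f_n}(x, x_n) \geq \alpha D^f(x, x_n),
\end{equation}
since $x \in \dom f$ and $x_n \in \IDD$. By (i), $(D^{f_n}(x, x_n))_{n\in\NN}$ is bounded, so dividing by $\alpha > 0$ shows that $(D^f(x, x_n))_{n\in\NN}$ is bounded as well. The assumed coercivity of $D^f(x, \cdot)$ then forces $(x_n)_{n\in\NN}$ to be bounded, for otherwise one could extract a subsequence along which $\|x_{n_k}\| \to +\infty$ and hence $D^f(x, x_{n_k}) \to +\infty$, a contradiction.

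The only genuinely delicate step is the scalar recursion lemma underlying (i); the rest is bookkeeping with the definitions. I would cite the lemma rather than reprove it, since it is used in essentially the same form in both \cite{IICV13b} and \cite{IIBBC03}, the two papers this work unifies.
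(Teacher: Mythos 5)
Your proposal is correct and follows essentially the same route as the paper: for \ref{IIpp:qb1i} the paper likewise sets $\xi_n=D^{f_n}(x,x_n)$, extracts the recursion $\xi_{n+1}\leq(1+\eta_n)\xi_n+\varepsilon_n$ from \eqref{IIe:qbm}, and cites the standard scalar lemma (namely \cite[Lemma~2.2.2]{IIPo87_B}, the quasi-monotone/Robbins--Siegmund-type result you invoke) rather than reproving it. For \ref{IIpp:qb1ii} the paper uses exactly your comparison $D^f(x,x_n)\leq\alpha^{-1}D^{f_n}(x,x_n)$ followed by coercivity, so there is nothing to add.
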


\begin{proof}
\ref{IIpp:qb1i}: 
Let us set $(\forall n\in\NN)$ $\xi_n=D^{f_n}(x,x_n)$. 
Since $(x_n)_{n\in\NN}$ is quasi-Bregman monotone with 
respect to $C$ relative to $(f_n)_{n\in\NN}$, 
there exist $(\eta_n)_{n\in\NN}\in\ell_{+}^1(\NN)$ and 
$(\varepsilon_n)_{n\in\NN}\in\ell_{+}^1(\NN)$ such that
\begin{equation}
\label{IIe:2b}
(\forall n\in\NN)\quad\xi_{n+1}\leq(1+\eta_n)\xi_n+\varepsilon_n.
\end{equation}
It therefore follows from \cite[Lemma~2.2.2]{IIPo87_B} 
that $(\xi_n)_{n\in\NN}$ converges, i.e., 
$(D^{f_n}(x,x_n))_{n\in\NN}$ converges.

\ref{IIpp:qb1ii}: 
Since $(f_n)_{n\in\NN}$ 
is in $\BP_{\alpha}(f)$, we deduce that 
\begin{equation}
(\forall n\in\NN)\quad D^f(x,x_n)\leq\alpha^{-1}D^{f_n}(x,x_n).
\end{equation}
Therefore, since \ref{IIpp:qb1i} implies that 
$(D^{f_n}(x,x_n))_{n\in\NN}$ is bounded, 
$(D^f(x,x_n))_{n\in\NN}$ is bounded. 
In turn, since $D^f(x,\cdot)$ is coercive, 
$(x_n)_{n\in\mathbb{N}}$ is bounded.
\end{proof}

The following result concerns the weak convergence of 
quasi-Bregman monotone sequences.

\begin{proposition}
\label{IIpp:qb2}
Let $\XX$ be a reflexive real Banach space, 
let $f\in\Gamma_0(\XX)$ be G\^ateaux differentiable 
on $\IDD\neq\emp$, let $(x_n)_{n\in\NN}\in(\IDD)^{\NN}$, 
let $C\subset\XX$ be such that $C\cap\IDD\neq\emp$, 
let $(\eta_n)_{n\in\NN}\in\ell_{+}^1(\NN)$, 
let $\alpha\in\RPP$, and let $(f_n)_{n\in\NN}$ in 
$\BP_{\alpha}(f)$ be such that $(\forall n\in\NN)$ 
$(1+\eta_n)f_n\succcurlyeq f_{n+1}$. Suppose that 
$(x_n)_{n\in\mathbb{N}}$ is quasi-Bregman 
monotone with respect to $C$ relative to 
$(f_n)_{n\in\mathbb{N}}$, that there exists $g\in\BF(f)$ 
such that for every $n\in\NN$, $g\succcurlyeq f_n$, 
and, for every $x_1\in\XX$ and every $x_2\in\XX$,
\begin{equation}
\label{IIe:4p}
\begin{cases}
x_1\in\mathfrak{W}(x_n)_{n\in\NN}\cap C\\
x_2\in\mathfrak{W}(x_n)_{n\in\NN}\cap C\\
\big(\pair{x_1-x_2}{\nabla f_n(x_n)}\big)_{n\in\NN}
\quad\text{converges}
\end{cases}
\Rightarrow\quad x_1=x_2.
\end{equation}
Moreover, suppose that $(\forall x\in\IDD)$ 
$D^f(x,\cdot)$ is coercive. Then $(x_n)_{n\in\NN}$ converges 
weakly to a point in $C\cap\IDD$ if and only if 
$\mathfrak{W}(x_n)_{n\in\NN}\subset C\cap\IDD$.
\end{proposition}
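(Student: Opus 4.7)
My plan is to reduce the nontrivial direction of the equivalence to the uniqueness of a weak sequential cluster point, the standard template in reflexive spaces. The forward implication is immediate, since $x_n\weakly\bar x$ would force $\WC(x_n)_{n\in\NN}=\{\bar x\}$, and the assumption then places $\bar x$ in $C\cap\IDD$. For the converse, I would first pick any $y\in C\cap\IDD$ (available because $C\cap\IDD\neq\emp$), combine the coercivity of $D^f(y,\cdot)$ with Proposition~\ref{IIpp:qb1}\ref{IIpp:qb1ii} to secure the boundedness of $(x_n)_{n\in\NN}$, and invoke reflexivity of $\XX$ to deduce $\WC(x_n)_{n\in\NN}\neq\emp$. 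Weak convergence of $(x_n)_{n\in\NN}$ to a point of $C\cap\IDD$ will then follow once the weak cluster set is shown to be a singleton.

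To establish $x_1=x_2$ for any two $x_1,x_2\in\WC(x_n)_{n\in\NN}$, both of which lie in $C\cap\IDD\subset C$ by assumption, I would apply \eqref{IIe:4p}. The only nontrivial premise to verify is the convergence of $\big(\pair{x_1-x_2}{\nabla f_n(x_n)}\big)_n$. For this, I would expand the Bregman distances to obtain the identity
\[
\pair{x_1-x_2}{\nabla f_n(x_n)}=f_n(x_1)-f_n(x_2)+D^{f_n}(x_2,x_n)-D^{f_n}(x_1,x_n),
\]
and then call on Proposition~\ref{IIpp:qb1}\ref{IIpp:qb1i} (applied successively with $x=x_1$ and $x=x_2$, both in $C\cap\IDD$) for the convergence of the two Bregman terms on the right. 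The task thus reduces to showing that $(f_n(x_1)-f_n(x_2))_n$ converges.

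The main obstacle lies in this last reduction. I would use the splitting $f_n(x_1)-f_n(x_2)=D^{f_n}(x_1,x_2)+\pair{x_1-x_2}{\nabla f_n(x_2)}$: the hypothesis $(1+\eta_n)f_n\succcurlyeq f_{n+1}$, together with $(\eta_n)_{n\in\NN}\in\ell_+^1(\NN)$ and the nonnegativity of $D^{f_n}(x_1,x_2)$, forces, via the Polyak-type argument that already underlies Proposition~\ref{IIpp:qb1}\ref{IIpp:qb1i}, the convergence of $(D^{f_n}(x_1,x_2))_n$. The remaining step, convergence of $(\pair{x_1-x_2}{\nabla f_n(x_2)})_n$, is the genuine hurdle, and this is where I expect the uniform upper Bregman dominance $g\succcurlyeq f_n$ to be decisive: through the convexity of $g-f_n$ on $\IDD$, it controls the variation of $\nabla f_n(x_2)$ and, combined with the convergence of $(D^{f_n}(x_1,x_2))_n$, should deliver the desired limit. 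Once that is secured, the chain of identities from the previous paragraph yields convergence of $(\pair{x_1-x_2}{\nabla f_n(x_n)})_n$, \eqref{IIe:4p} applies, $x_1=x_2$ follows, and the proof is complete.
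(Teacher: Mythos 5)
Your skeleton is the same as the paper's up to the decisive step, and everything you actually prove is correct: the trivial forward implication, boundedness via Proposition~\ref{IIpp:qb1}\ref{IIpp:qb1ii} and reflexivity, the reduction of weak convergence to uniqueness of weak cluster points, the identity $\pair{x_1-x_2}{\nabla f_n(x_n)}=f_n(x_1)-f_n(x_2)+D^{f_n}(x_2,x_n)-D^{f_n}(x_1,x_n)$ combined with Proposition~\ref{IIpp:qb1}\ref{IIpp:qb1i}, and the Polyak-type argument giving convergence of $\big(D^{f_n}(x_1,x_2)\big)_{n\in\NN}$ from $D^{f_{n+1}}(x_1,x_2)\leq(1+\eta_n)D^{f_n}(x_1,x_2)$ (this last step is even more economical than the paper's, and does not use $g$ at all). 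The gap is the final step, which you correctly flag as the ``genuine hurdle'' but then wave at: convergence of $\big(\pair{x_1-x_2}{\nabla f_n(x_2)}\big)_{n\in\NN}$ does \emph{not} follow from $g\succcurlyeq f_n$, nor from any combination of the dominance hypotheses. The reason is structural: every hypothesis available to you at that point --- $f_n\in\BP_\alpha(f)$, $(1+\eta_n)f_n\succcurlyeq f_{n+1}$, $g\succcurlyeq f_n$, and the quasi-Bregman monotonicity of $(x_n)_{n\in\NN}$ --- is expressed purely through Bregman distances, and $D^{f_n}$ is invariant under replacing $f_n$ by $f_n+\ell_n$ with $\ell_n\in\XX^*$ arbitrary, whereas $\pair{x_1-x_2}{\nabla f_n(x_2)}$ shifts by $\pair{x_1-x_2}{\ell_n}$, which can be made to diverge when $x_1\neq x_2$. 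So no argument from those hypotheses can ``control the variation of $\nabla f_n(x_2)$,'' and your sketch cannot be completed along the announced lines.

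The paper proceeds differently at exactly this juncture. It sets $\phi_n(t)=\pair{x_1-x_2}{\nabla f_n(x_2+t(x_1-x_2))-\nabla f_n(x_2)}$, uses the symmetrization identity $t\,\phi_n(t)=D^{f_n}\big(x_2+t(x_1-x_2),x_2\big)+D^{f_n}\big(x_2,x_2+t(x_1-x_2)\big)$ to convert $(1+\eta_n)f_n\succcurlyeq f_{n+1}$ into the pointwise inequality $0\leq\phi_{n+1}(t)\leq(1+\eta_n)\phi_n(t)$, gets pointwise convergence of $(\phi_n)_{n\in\NN}$ from Polyak's lemma, and then uses $g\succcurlyeq f_n$ only to supply the integrable majorant $\phi_n\leq\phi$ required by Lebesgue's dominated convergence theorem; passing the limit under $\int_0^1$ is what produces the convergence of $\big(f_n(x_1)-f_n(x_2)\big)_{n\in\NN}$ in \eqref{IIe:24p}, after which the Bregman-difference identity and \eqref{IIe:4p} finish the argument. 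So in the paper $g$ plays the role of a dominating function for an interchange of limit and integral, not of a modulus for gradient oscillations --- that is the missing idea in your proposal. One caveat in your favor: your decomposition isolates a real subtlety, since with the differenced integrand one has $\int_0^1\phi_n(t)\,dt=f_n(x_1)-f_n(x_2)-\pair{x_1-x_2}{\nabla f_n(x_2)}=D^{f_n}(x_1,x_2)$, so the very linear term on which you are stuck must also be accounted for when identifying the limit in \eqref{IIe:24p}; what the dominated-convergence argument delivers unconditionally is the convergence of $\big(D^{f_n}(x_1,x_2)\big)_{n\in\NN}$ (matching what you obtained by Polyak) and hence of $\big(\pair{x_1-x_2}{\nabla f_n(x_2)-\nabla f_n(x_n)}\big)_{n\in\NN}$, which is the affine-invariant quantity this circle of hypotheses can actually reach. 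In any case, your proposal as written does not prove the proposition: its last step is both unproven and unprovable from the stated dominance assumptions.
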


\begin{proof}
Necessity is clear. To show sufficiency, suppose that every weak 
sequential cluster point of $(x_n)_{n\in\NN}$ is in 
$C\cap\IDD$ and let $x_1$ and $x_2$ be two such points. 
First, it follows from Proposition~\ref{IIpp:qb1}\ref{IIpp:qb1i} 
that
\begin{equation}
\label{IIe:20p}
\big(D^{f_n}(x_1,x_n)\big)_{n\in\NN}
\quad\text{and}\quad
\big(D^{f_n}(x_2,x_n)\big)_{n\in\NN}
\quad\text{are convergent}.
\end{equation}
Next, let us define the following functions 
\begin{equation}
\phi\colon\left[0,1\right]\to\RR\colon t\mapsto
\Pair{x_1-x_2}{\nabla g(x_2+t(x_1-x_2))-\nabla g(x_2)},
\end{equation}
and
\begin{equation}
(\forall n\in\NN)\quad
\phi_n\colon\left[0,1\right]\to\RR\colon t\mapsto
\Pair{x_1-x_2}{\nabla f_n(x_2+t(x_1-x_2))
-\nabla f_n(x_2)}.
\end{equation}
Then
\begin{equation}
\int_0^1\phi(t)dt=g(x_1)-g(x_2)
\quad\text{and}\quad(\forall n\in\NN)\quad
\int_0^1\phi_n(t)dt=f_n(x_1)-f_n(x_2).
\end{equation}
For every $n\in\NN$, since 
$(1+\eta_n)f_n\succcurlyeq f_{n+1}$, 
for every $t\in\left]0,1\right])$, we have
\begin{align}
\phi_{n+1}(t)
&=\Pair{x_1-x_2}{\nabla f_{n+1}(x_2+t(x_1-x_2))
-\nabla f_{n+1}(x_2)}\nonumber\\
&=t^{-1}\Pair{x_2+t(x_1-x_2)-x_2}{\nabla f_{n+1}(x_2+t(x_1-x_2))
-\nabla f_{n+1}(x_2)}\nonumber\\
&=t^{-1}\big(D^{f_{n+1}}\big(x_2+t(x_1-x_2),x_2\big)
+D^{f_{n+1}}\big(x_2,x_2+t(x_1-x_2)\big)\big)\nonumber\\
&\leq (1+\eta_n)t^{-1}\big(D^{f_n}\big(x_2+t(x_1-x_2),x_2\big)
+D^{f_n}\big(x_2,x_2+t(x_1-x_2)\big)\big)\nonumber\\
&=(1+\eta_n)t^{-1}\Pair{x_2+t(x_1-x_2)-x_2}
{\nabla f_n(x_2+t(x_1-x_2))
-\nabla f_n(x_2)}\nonumber\\
&=(1+\eta_n)\Pair{x_1-x_2}
{\nabla f_n(x_2+t(x_1-x_2))
-\nabla f_n(x_2)}\nonumber\\
&=(1+\eta_n)\phi_n(t).
\end{align}
Consequently,
\begin{equation}
\label{IIe:21p}
(\forall n\in\NN)(\forall t\in\left]0,1\right])
\quad 0\leq\phi_{n+1}(t)\leq(1+\eta_n)\phi_n(t).
\end{equation}
It is clear that \eqref{IIe:21p} is valid for $t=0$ since 
in this case, all terms are equal to $0$. 
In turn, we deduce from \cite[Lemma~2.2.2]{IIPo87_B} that
\begin{equation}
\label{IIe:22p}
(\phi_n)_{n\in\NN}\quad\text{converges pointwise}.
\end{equation}
On the other hand, for every $n\in\NN$, 
since $g\succcurlyeq f_n$, the same argument 
as above shows that
\begin{equation}
\label{IIe:23p}
(\forall t\in\left[0,1\right])
\quad 0\leq\phi_n(t)\leq\phi(t).
\end{equation}
By invoking \eqref{IIe:22p}, \eqref{IIe:23p}, and Lebesgue's 
dominated convergence theorem, we obtain that
\begin{equation}
\Bigg(\int_0^1\phi_n(t)dt\Bigg)_{n\in\NN}\quad\text{converges,}
\end{equation}
which implies that 
\begin{equation}
\label{IIe:24p}
\Big(f_n(x_1)-f_n(x_2)\Big)_{n\in\NN}\quad\text{converges.}
\end{equation}
We also observe that 
\begin{equation}
(\forall n\in\NN)\quad 
D^{f_n}(x_1,x_n)-D^{f_n}(x_2,x_n)=f_n(x_1)-f_n(x_2)
-\pair{x_1-x_2}{\nabla f_n(x_n)},
\end{equation}
and hence, it follows from \eqref{IIe:20p} 
and \eqref{IIe:24p} that 
\begin{equation}
\label{IIIe:25}
\big(\pair{x_1-x_2}{\nabla f_n(x_n)}\big)_{n\in\NN}
\quad\text{converges}.
\end{equation}
In turn, \eqref{IIe:4p} forces $x_1=x_2$. 
Since Proposition~\ref{IIpp:qb1}\ref{IIpp:qb1ii} asserts 
that $(x_n)_{n\in\NN}$ is bounded and since $\XX$ is reflexive, 
we conclude that $x_n\weakly x_1\in C\cap\IDD$.
\end{proof}

\begin{example}
Let $\XX$ be a reflexive real Banach space, 
let $f\in\Gamma_0(\XX)$ be G\^ateaux differentiable 
on $\IDD\neq\emp$, let $(f_n)_{n\in\NN}$ 
be in $\BF(f)$, let $(x_n)_{n\in\NN}\in(\IDD)^{\NN}$, 
and let $C\subset\XX$. Suppose that 
$C\cap\overline{\dom}f$ is a singleton. 
Then \eqref{IIe:4p} is satisfied.
\end{example}
\begin{proof}
Since $(x_n)_{n\in\NN}\in(\IDD)^{\NN}$, 
$\mathfrak{W}(x_n)_{n\in\NN}\subset\overline{\dom}f$, 
and therefore, $\mathfrak{W}(x_n)_{n\in\NN}\cap C$ 
is at most a singleton.
\end{proof}

\begin{example}
Let $\XX$ be a reflexive real Banach space, 
let $f\in\Gamma_0(\XX)$ be G\^ateaux differentiable 
on $\IDD\neq\emp$, 
let $(x_n)_{n\in\NN}\in(\IDD)^{\NN}$, 
let $C\subset\IDD$, and set $(\forall n\in\NN)$ $f_n=\hat{f}$. 
Suppose that $f|_{\IDD}$ is strictly convex 
and that $\nabla f$ is weakly sequentially continuous. 
Then \eqref{IIe:4p} is satisfied.
\end{example}
\begin{proof}
Suppose that $x_1\in\mathfrak{W}(x_n)_{n\in\NN}\cap C$ 
and $x_2\in\mathfrak{W}(x_n)_{n\in\NN}\cap C$ 
are such that $(\pair{x_1-x_2}{\nabla f_n(x_n})_{n\in\NN}$ 
converges and $x_1\neq x_2$. Take strictly increasing sequences 
$(k_n)_{n\in\NN}$ and $(l_n)_{n\in\NN}$ in $\NN$ such that 
$x_{k_n}\weakly x_1$ and $x_{l_n}\weakly x_2$. 
Since $\nabla f$ is weakly sequentially continuous, 
by taking the limit in \eqref{IIe:4p} along subsequences 
$(x_{k_n})_{n\in\NN}$ and $(x_{l_n})_{n\in\NN}$, we get
\begin{equation}
\label{IIe:5r}
\pair{x_1-x_2}{\nabla f(x_1)-\nabla f(x_2)}=0
\end{equation}
Since $f|_{\IDD}$ is strictly convex, 
$\nabla f$ is strictly monotone 
\cite[Theorem~2.4.4(ii)]{IIZa02_B}, i.e.,
\begin{equation}
\pair{x_1-x_2}{\nabla f(x_1)-\nabla f(x_2)}>0,
\end{equation}
and we reach a contradiction.
\end{proof}

\begin{example}
Let $\XX$ be a real Hilbert space, 
let $f=\|\cdot\|^2/2$, let $C\subset\XX$, 
let $(x_n)_{n\in\NN}$ be a sequence in $\XX$, 
let $\alpha\in\RPP$, let $U$ and 
$(U_n)_{n\in\NN}$ be self-adjoint linear operators 
from $\XX$ in $\XX$ such that $U_n\to U$ pointwise, 
and set $(\forall n\in\NN)$ $f_n=\pair{\cdot}{U_n\cdot}/2$. 
Suppose that $\pair{\cdot}{U\cdot}\geq\alpha\|\cdot\|^2$. 
Then \eqref{IIe:4p} is satisfied.
\end{example}
\begin{proof}
It is easy to see that, for every $n\in\NN$, 
$f_n$ is G\^ateaux differentiable on $\XX$ with 
$\nabla f_n=U_n$. 
Suppose that $x_1\in\mathfrak{W}(x_n)_{n\in\NN}\cap C$ 
and $x_2\in\mathfrak{W}(x_n)_{n\in\NN}\cap C$ 
are such that $(\pair{x_1-x_2}{\nabla f_n(x_n})_{n\in\NN}$ 
converges. Take strictly increasing sequences 
$(k_n)_{n\in\NN}$ and $(l_n)_{n\in\NN}$ in $\NN$ 
such that $x_{k_n}\weakly x_1$ and $x_{l_n}\weakly x_2$. 
We have
\begin{equation}
\Pair{x_1-x_2}{\nabla f_{k_n}(x_{k_n})}
=\Pair{x_1-x_2}{U_{k_n}x_{k_n}}
=\Pair{U_{k_n}x_1-U_{k_n}x_2}{x_{k_n}}\to\Pair{Ux_1-Ux_2}{x_1},
\end{equation}
and
\begin{equation}
\Pair{x_1-x_2}{\nabla f_{l_n}(x_{l_n})}
=\Pair{x_1-x_2}{U_{l_n}x_{l_n}}
=\Pair{U_{l_n}x_1-U_{l_n}x_2}{x_{l_n}}\to\Pair{Ux_1-Ux_2}{x_2},
\end{equation}
and hence, $0=\Pair{Ux_1-Ux_2}{x_1-x_2}\geq\alpha\|x_1-x_2\|^2$, 
and therefore, $x_1=x_2$.
\end{proof}

The following condition will be used subsequently 
(see \cite[Examples~4.10, 5.11, and 5.13]{IIBBC03} 
for special cases).

\begin{condition}
{\rm\cite[Condition~4.4]{IIBBC03}}
\label{IIcd:2}
Let $\XX$ be a reflexive real Banach space 
and let $f\in\Gamma_0(\XX)$ be G\^ateaux differentiable on 
$\IDD\neq\emp$. For every bounded sequences 
$(x_n)_{n\in\NN}$ and $(y_n)_{n\in\NN}$ in $\IDD$,
\begin{equation}
\label{IIe:5}
D^f(x_n,y_n)\to 0\quad\Rightarrow\quad x_n-y_n\to 0.
\end{equation}
\end{condition}

We now present a characterization of the strong convergence 
of stationarily quasi-Bregman monotone sequences.

\begin{proposition}
\label{IIpp:qb3} 
Let $\XX$ be a reflexive real Banach space, 
let $f\in\Gamma_0(\XX)$ be a Legendre function, 
let $\alpha\in\RPP$, 
let $(f_n)_{n\in\NN}$ be in $\BP_{\alpha}(f)$, 
let $(x_n)_{n\in\NN}\in(\IDD)^{\NN}$, and let 
$C$ be a closed convex subset of $\XX$ such that 
$C\cap\IDD\neq\emp$. Suppose that 
$(x_n)_{n\in\mathbb{N}}$ is stationarily quasi 
Bregman monotone with respect to $C$ relative to 
$(f_n)_{n\in\mathbb{N}}$, that $f$ satisfies 
Condition~\ref{IIcd:2}, and that 
$(\forall x\in\IDD)$ $D^f(x,\cdot)$ is coercive. 
In addition, suppose that there exists $\beta\in\RPP$ 
such that $(\forall n\in\NN)$ 
$\beta\hat{f}\succcurlyeq f_n$. 
Then $(x_n)_{n\in\NN}$ converges 
strongly to a point in $C\cap\overline{\dom}f$ 
if and only if $\varliminf D^f_C(x_n)=0$. 
\end{proposition}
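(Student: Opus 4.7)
My plan is as follows. \emph{Necessity} is the standard direction: if $x_n\to x^\ast\in C\cap\overline{\dom}f$, choose $z_k\in C\cap\IDD$ with $z_k\to x^\ast$ (using $C\cap\IDD\neq\emp$ and convexity of $C$) and use that $D^f(z_k,\cdot)$ is continuous on $\IDD$ at $z_k$ (because $f$ is Legendre, hence continuously G\^ateaux differentiable on $\IDD$), together with a diagonal extraction, to conclude $\varliminf_n D^f_C(x_n)\leq \varliminf_n D^f(z_k,x_n)$ can be made arbitrarily small in $k$.

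For \emph{sufficiency}, the plan has three steps. \textbf{(Step 1: extraction.)} Extract a subsequence $(x_{k_n})_{n\in\NN}$ with $D^f_C(x_{k_n})\to 0$. Since $f$ is Legendre and $C\cap\IDD\neq\emp$, the Bregman projector $P^f_C$ is well defined, so set $p_n=P^f_C(x_{k_n})\in C\cap\IDD$, giving $D^f(p_n,x_{k_n})=D^f_C(x_{k_n})\to 0$. \textbf{(Step 2: propagation via stationarity.)} Apply \eqref{IIe:sqbm} with $x=p_n\in C\cap\dom f$ -- here stationarity is decisive, as the sequences $(\eta_j)_{j\in\NN}$ and $(\varepsilon_j)_{j\in\NN}$ do not depend on the test point -- and iterate to get, for every $m\geq k_n$,
\begin{equation*}
D^{f_m}(p_n,x_m)\leq\Bigg(\prod_{j=k_n}^{m-1}(1+\eta_j)\Bigg)D^{f_{k_n}}(p_n,x_{k_n})+\sum_{j=k_n}^{m-1}\Bigg(\prod_{i=j+1}^{m-1}(1+\eta_i)\Bigg)\varepsilon_j.
\end{equation*}
Setting $K=\prod_{j\in\NN}(1+\eta_j)<\pinf$ and sandwiching via $\beta\hat{f}\succcurlyeq f_{k_n}$ on the right and $f_m\succcurlyeq\alpha f$ on the left, this yields
\begin{equation*}
\alpha D^f(p_n,x_m)\leq K\beta D^f_C(x_{k_n})+K\sum_{j\geq k_n}\varepsilon_j,
\end{equation*}
so that $\delta_n:=\sup_{m\geq k_n}D^f(p_n,x_m)\to 0$ as $n\to\pinf$.

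\textbf{(Step 3: norm convergence via Condition~\ref{IIcd:2}.)} First check boundedness: $(x_n)_{n\in\NN}$ is bounded by Proposition~\ref{IIpp:qb1}\ref{IIpp:qb1ii}, and $(p_n)_{n\in\NN}$ is bounded because, fixing $x_0\in C\cap\IDD$, the projection inequality gives $D^f(p_n,x_{k_n})\leq D^f(x_0,x_{k_n})$, whose right-hand side is bounded in $n$ by Proposition~\ref{IIpp:qb1}\ref{IIpp:qb1i} together with $f_n\succcurlyeq\alpha f$; then Condition~\ref{IIcd:2} applied along $(p_n,x_{k_n})$ rules out an unbounded subsequence of $(p_n)$. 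Now fix any sequence $(m_n)_{n\in\NN}$ with $m_n\geq k_n$; the pair $(p_n,x_{m_n})_{n\in\NN}$ lies in $\IDD$, is bounded, and $D^f(p_n,x_{m_n})\leq\delta_n\to 0$, so Condition~\ref{IIcd:2} yields $p_n-x_{m_n}\to 0$. A standard contradiction argument promotes this to $\sup_{m\geq k_n}\|p_n-x_m\|\to 0$, forcing $(x_n)_{n\in\NN}$ to be Cauchy and hence to converge strongly to some $x^\ast$. Closedness of $C$ and $p_n\to x^\ast$ give $x^\ast\in C$, while $x_n\in\IDD$ gives $x^\ast\in\overline{\IDD}\subset\overline{\dom}f$.

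The main obstacle is Step~3: the chicken-and-egg between boundedness of $(p_n)$ and the hypothesis of Condition~\ref{IIcd:2}, together with the promotion of subsequential Bregman smallness into \emph{uniform} norm smallness of $p_n-x_m$ over $m\geq k_n$. Everything else is a matter of combining the stationary iteration of Step~2 with the two-sided estimate $\alpha D^f\leq D^{f_n}\leq\beta D^f$ on $\dom f\times\IDD$.
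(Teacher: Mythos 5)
Your proposal contains two genuine gaps, one in each direction; the sufficiency route is otherwise a workable variant of the paper's argument, but the necessity sketch fails as an approach. On necessity: your argument never invokes the standing hypothesis that $(x_n)_{n\in\NN}$ is stationarily quasi-Bregman monotone, and this direction is not ``standard'' here precisely because the strong limit may lie on the boundary of $\dom f$. Pointwise continuity of $D^f(z_k,\cdot)$ at $z_k$ gives no control of $D^f(z_k,x_n)$ along $x_n\to x^\ast$: for the Burg entropy $f=-\ln$ on $\RR$ one has $D^f(z,y)=z/y-\ln(z/y)-1$, so if $x_n\downarrow 0=x^\ast$ then $\varliminf_n D^f(z_k,x_n)=\pinf$ for \emph{every} fixed $z_k\in\RPP$, and no diagonal extraction repairs a quantity that is infinite for each $k$. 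Worse, the monotonicity-free implication you are implicitly proving (``$x_n\to x^\ast\in C\cap\overline{\dom}f$ forces $\varliminf D^f_C(x_n)=0$'') is false: with $f\colon(\xi_1,\xi_2)\mapsto-\ln\xi_1-\ln\xi_2$, $C=\menge{(s,s)}{0\leq s\leq 1}$, and $x_n=(1/n,1/n^2)\to(0,0)\in C\cap\overline{\dom}f$, a direct computation gives $D^f_C(x_n)=\ln\big((1+n)^2/(4n)\big)\to\pinf$. The paper's necessity proof is where the monotonicity hypothesis earns its keep: Proposition~\ref{IIpp:qb1}\ref{IIpp:qb1i} bounds $(D^{f_n}(x,x_n))_{n\in\NN}$, hence $(D^f(x,x_n))_{n\in\NN}$; the Legendre identity \eqref{IIe:bounded1} and the coercivity of $D^{f^*}(\cdot,\nabla f(x))$ then bound $(\nabla f(x_n))_{n\in\NN}$, whence $\pair{\overline{x}-x_n}{\nabla f(x_n)}\to 0$, and lower semicontinuity of $f$ finishes. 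That gradient-boundedness step is the idea missing from your sketch, and it cannot be avoided.

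On sufficiency: your Steps 1--2 are correct, and the estimate $\alpha D^f(p_n,x_m)\leq K\beta D^f_C(x_{k_n})+K\sum_{j\geq k_n}\varepsilon_j$ with $K=\prod_{j\in\NN}(1+\eta_j)<\pinf$ is in substance the paper's tail estimate \eqref{IIe:17}, applied along a subsequence instead of after first proving $D^f_C(x_n)\to 0$ for the whole sequence via the infimized recursion $\zeta_{n+1}\leq(1+\eta_n)\zeta_n+\varepsilon_n$ and Polyak's lemma; your variant legitimately bypasses that intermediate step. The genuine gap is the boundedness of $(p_n)_{n\in\NN}$, which you flag but resolve circularly: Condition~\ref{IIcd:2} \emph{assumes} both sequences bounded, so it cannot ``rule out an unbounded subsequence of $(p_n)$,'' and the inequality $D^f(p_n,x_{k_n})\leq D^f(x_0,x_{k_n})$ controls a Bregman distance with $p_n$ in the \emph{first} slot, where no coercivity is available (the hypothesis is coercivity of $D^f(x,\cdot)$, i.e., in the second slot). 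The fix is exactly the paper's \eqref{IIe:12}: since $x_0\in C\cap\IDD=\Fix P_C^f$, \cite[Proposition~3.3(i) and Theorem~3.34]{IIBBC03} give $D^f(x_0,p_n)=D^f\big(x_0,P_C^fx_{k_n}\big)\leq D^f(x_0,x_{k_n})$, whose right-hand side is bounded by Proposition~\ref{IIpp:qb1}\ref{IIpp:qb1i} together with $f_{k_n}\succcurlyeq\alpha f$, and then coercivity of $D^f(x_0,\cdot)$ applies in the correct slot. With that repaired, your selection argument promoting $p_n-x_{m_n}\to 0$ to $\sup_{m\geq k_n}\|p_n-x_m\|\to 0$ is valid, the Cauchy property follows, and the endgame ($x^\ast\in C$ by closedness of $C$ since $p_n\to x^\ast$, and $x^\ast\in\overline{\dom}f$ since $(x_n)_{n\in\NN}$ lies in $\IDD$) matches the paper's.
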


\begin{proof}
To show the necessity, suppose that 
$x_n\to\overline{x}\in C\cap\overline{\dom}f$ 
and take $x\in C\cap\IDD$. 
Since Proposition~\ref{IIpp:qb1}\ref{IIpp:qb1i} 
states that $(D^{f_n}(x,x_n))_{n\in\NN}$ is bounded 
and since
\begin{equation}
(\forall n\in\NN)\quad D^f(x,x_n)\leq D^{f_n}(x,x_n),
\end{equation}
we deduce that $(D^f(x,x_n))_{n\in\NN}$ is bounded. 
However, by \cite[Lemma~7.3(vii)]{IIBBC01},
\begin{equation}
\label{IIe:bounded1}
(\forall n\in\NN)\quad
D^{f^*}\big(\nabla f(x_n),\nabla f(x)\big)=D^f(x,x_n).
\end{equation}
Therefore $(D^{f^*}(\nabla f(x_n),\nabla f(x)))_{n\in\NN}$ 
is bounded. In turn, since $D^{f^*}(\cdot,\nabla f(x))$ 
is coercive \cite[Lemma~7.3(v)]{IIBBC01}, 
we get $(\nabla f(x_n))_{n\in\NN}$ is bounded 
and hence $\pair{\overline{x}-x_n}{\nabla f(x_n)}\to 0$. 
Since
\begin{align}
(\forall n\in\NN)\quad 
D_C^f(x_n)&=\inf D^f(C,x_n)\nonumber\\
&\leq\inf D^f(C\cap\overline{\dom}f,x_n)\nonumber\\
&\leq D^f(\overline{x},x_n)\nonumber\\
&=f(\overline{x})-f(x_n)-\Pair{\overline{x}-x_n}{\nabla f(x_n)},
\end{align}
we obtain
\begin{equation}
\label{IIe:bounded2}
\varliminf D_C^f(x_n)\leq
f(\overline{x})-\varlimsup f(x_n)
-\lim\Pair{\overline{x}-x_n}{\nabla f(x_n)}
=f(\overline{x})-\varlimsup f(x_n).
\end{equation}
Since $f$ is lower semicontinuous,
\begin{equation}
\label{IIe:bounded3}
f(\overline{x})\leq\varliminf f(x_n)\leq\varlimsup f(x_n).
\end{equation}
Altogether, \eqref{IIe:bounded2} and \eqref{IIe:bounded3} yield
\begin{equation}
\varliminf D_C^f(x_n)\to 0.
\end{equation}
We now show the sufficiency. 
First, since $f$ is Legendre and $C\cap\IDD\neq\emp$, 
\eqref{IIe:2001} yields
\begin{equation}
\label{IIe:5b}
P_C^f\colon\IDD\to C\cap\IDD.
\end{equation}
Next, we set
\begin{equation}
(\forall n\in\NN)\quad\varrho_n=D^f_C(x_n)
\quad\text{and}\quad\zeta_n
=\inf_{x\in C\cap\dom f}D^{f_n}(x,x_n).
\end{equation}
Then $\varliminf\varrho_n=0$. For every $n\in\NN$, 
since $\beta\hat{f}\succcurlyeq f_n\succcurlyeq\alpha f$, 
we obtain
\begin{equation}
(\forall x\in C\cap\dom f)\quad 
0\leq\alpha D^f(x,x_n)\leq D^{f_n}(x,x_n)\leq\beta D^f(x,x_n).
\end{equation}
In the above inequalities, after taking the infimum over 
$x\in C\cap\dom f$, we get
\begin{equation}
\label{IIe:8}
(\forall n\in\NN)\quad 0\leq\alpha\varrho_n
\leq\zeta_n\leq\beta\varrho_n
\end{equation}
and therefore,
\begin{equation}
\label{IIe:10}
0\leq\alpha\varliminf\varrho_n\leq\varliminf\zeta_n
\leq\beta\varliminf\varrho_n=0.
\end{equation}
On the other hand, since 
$(x_n)_{n\in\mathbb{N}}$ is stationarily quasi 
Bregman monotone with respect to $C$ relative to 
$(f_n)_{n\in\mathbb{N}}$, there exist 
$(\eta_n)_{n\in\NN}\in\ell_{+}^1(\NN)$ and 
$(\varepsilon_n)_{n\in\NN}\in\ell_{+}^1(\NN)$ such that
\begin{equation}
\label{IIe:6}
(\forall x\in C\cap\dom f)(\forall n\in\NN)\quad 
D^{f_{n+1}}(x,x_{n+1})
\leq(1+\eta_n)D^{f_n}(x,x_n)+\varepsilon_n.
\end{equation}
Taking the infimum in \eqref{IIe:6} over $C\cap\dom f$ yields
\begin{equation}
\label{IIe:7}
(\forall n\in\NN)\quad\zeta_{n+1}\leq(1+\eta_n)\zeta_n
+\varepsilon_n. 
\end{equation}
It therefore follows from \cite[Lemma~2.2.2]{IIPo87_B} 
that $(\zeta_n)_{n\in\NN}$ converges, and thus, 
we deduce from \eqref{IIe:10} that $\zeta_n\to 0$. 
Appealing to \eqref{IIe:8}, we get $\varrho_n\to 0$, i.e.,
\begin{equation}
\label{IIe:11}
D^f\big(P_{C}^fx_n,x_n\big)\to 0.
\end{equation} 
Now let $x\in C\cap\IDD$. Then $x\in\Fix P_{C}^f$ 
\cite[Proposition~3.22(ii)(b)]{IIBBC03} and it follows from 
Proposition~\ref{IIpp:qb1}\ref{IIpp:qb1i} that 
$(D^{f_n}(x,x_n))_{n\in\NN}$ is bounded, 
and hence, $(D^f(x,x_n))_{n\in\NN}$ is likewise. In turn, 
since \cite[Proposition~3.3(i) and Theorem~3.34]{IIBBC03} 
yield  
\begin{equation}
\label{IIe:12}
(\forall n\in\NN)\quad D^f\big(x,P^f_Cx_n\big)\leq D^f(x,x_n),
\end{equation}
we deduce that $(D^f(x,P^f_Cx_n))_{n\in\NN}$ is bounded, 
and hence, since $D^f(x,\cdot)$ is coercive, we obtain that
\begin{equation}
\label{IIe:13}
\big(P^f_Cx_n\big)_{n\in\NN}\in(\IDD)^{\NN}
\;\;\text{is bounded}.
\end{equation}
Therefore, since $f$ satisfies Condition~\ref{IIcd:2}, 
it follows from \eqref{IIe:11} that
\begin{equation}
\label{IIe:14}
P_{C}^fx_n-x_n\to 0.
\end{equation}
Since \eqref{IIe:5b} entails that
\begin{equation}
\label{IIe:14b}
(\forall n\in\NN)\quad P_C^fx_n\in C\cap\IDD=\Fix P_C^f, 
\end{equation}
we obtain
\begin{equation}
\label{IIe:15}
(\forall n\in\NN)\quad 
0\leq d_C(x_n)=\inf\limits_{x\in C}\|x-x_n\|\leq\|P_C^fx_n-x_n\|.
\end{equation}
Altogether, \eqref{IIe:14} and \eqref{IIe:15} imply 
that
\begin{equation}
\label{IIe:16}
d_C(x_n)\to 0.
\end{equation}
Set $\tau=\prod_{k\in\NN}(1+\eta_k)$. Then $\tau<+\infty$ 
\cite[Theorem~3.7.3]{IIKno56_B}. By invoking \eqref{IIe:14b} 
and \cite[Proposition~3.3(i) and Theorem~3.34]{IIBBC03}, we get
\begin{align}
\label{IIe:17}
(\forall n\in\NN)(\forall m\in\NN)\quad D^f\big(P_{C}^fx_n,
P_{C}^fx_{m+n}\big)&\leq D^f\big(P_{C}^fx_n,x_{m+n}\big)
\nonumber\\
&\leq\alpha^{-1}D^{f_{m+n}}\big(P_{C}^fx_n,x_{m+n}\big)
\nonumber\\
&\leq\tau\alpha^{-1}\Bigg(D^{f_n}\big(P_{C}^fx_n,x_n\big)
+\sum_{k=n}^{n+m-1}\varepsilon_k\Bigg)\nonumber\\
&\leq\tau\alpha^{-1}\Bigg(\beta D^f\big(P_{C}^fx_n,x_n\big)
+\sum_{k\geq n}\varepsilon_k\Bigg)\nonumber\\
&=\tau\alpha^{-1}\Bigg(\beta\varrho_n+\sum_{k\geq n}
\varepsilon_k\Bigg).
\end{align}
After taking the limit as $n\to +\infty$ and $m\to +\infty$ 
in \eqref{IIe:17}, we obtain
\begin{equation}
D^f\big(P_{C}^fx_{m+n},P_{C}^fx_n\big)\to 0,
\end{equation}
and thus \eqref{IIe:13} yield
\begin{equation}
\label{IIe:18}
P_{C}^fx_{m+n}-P_{C}^fx_n\to 0.
\end{equation}
However,
\begin{equation}
\label{IIe:19}
(\forall n\in\NN)(\forall m\in\NN)\quad\|x_{m+n}-x_n\|\leq
\|x_{m+n}-P_{C}^fx_{m+n}\|+\|P_{C}^fx_{m+n}-P_{C}^fx_n\|+
\|P_{C}^fx_n-x_n\|.
\end{equation}
After taking the limit as $n\to +\infty$ and $m\to +\infty$ 
in \eqref{IIe:19} then using \eqref{IIe:14} and 
\eqref{IIe:18}, we get
\begin{equation}
\|x_{n+m}-x_n\|\to 0.
\end{equation}
Thus, $(x_n)_{n\in\NN}$ is a Cauchy sequence in $\XX$, 
and hence, there exists $\overline{x}\in\XX$ such that 
$x_n\to\overline{x}$. By \eqref{IIe:16} and the 
continuity of $d_C$ \cite[Example~1.47]{IIBC11_B}, we obtain 
$d_C(\overline{x})=0$ and, since $C$ is closed, 
$\overline{x}\in C$. Because $(x_n)_{n\in\NN}$ is in $\IDD$, 
we conclude that $\overline{x}\in\overline{\dom}f$.
\end{proof}

\begin{remark}
In Proposition~\ref{IIpp:qb3}, suppose that $\XX$ is a Hilbert 
space, that $f=\|\cdot\|^2/2$, and that $(\forall n\in\NN)$ 
$f_n\colon x\mapsto\pair{x}{U_nx}/2$, where $(U_n)_{n\in\NN}$ 
are operators in $\BP_{\alpha}(\XX)$ such that 
$\sup_{n\in\NN}\|U_n\|<\pinf$. Then we recover 
\cite[Theorem~3.4]{IICV13b} with 
$\phi=|\cdot|^2/2$.
\end{remark}

\section{Bregman distance-based proximity operators}
\label{IIsec:VBPA}
\noindent 
Many algorithms in optimization in a real Hilbert space 
$\HH$ are based on Moreau's proximity operator 
\cite{IIMor62b} of a function 
$\varphi\in\Gamma_0(\HH)$
\begin{equation}
\label{IIe:prox}
\prox_{\varphi}\colon\HH\to\HH\colon 
x\mapsto\argmin\big(\varphi+\|\cdot-x\|^2/2\big).
\end{equation}
Because the quadratic term in \eqref{IIe:prox} 
is difficult to manipulate in Banach spaces 
since its gradient is nonlinear, alternative notions based on 
Bregman distances have been used (see \cite{IIBBC03} 
and the references therein). 
This leads to the notion of $D^f$-proximal operators. 
In this section, we investigate some their basic properties. 

\begin{lemma}{\rm\cite[Section~3]{IIBBC03}}
\label{IIl:prox}
Let $\XX$ be a reflexive real Banach space, 
let $\varphi\in\Gamma_0(\XX)$ be bounded from below, 
and let $f\in\Gamma_0(\XX)$ be a Legendre function 
such that $\dom\varphi\cap\IDD\neq\emp$. 
Then the following hold:
\begin{enumerate}
\item
$\prox_{\varphi}^f$ is single-valued on its domain.
\item
$\ran\prox_{\varphi}^f\subset\dom\prox_{\varphi}^f=\IDD$.
\item
\label{IIl:proxiii}
$\prox_{\varphi}^f=(\nabla f+\partial\varphi)^{-1}\circ\nabla f$.
\item
$\Fix\prox_{\varphi}^f=\Argmin\varphi\cap\IDD$.
\item
\label{IIl:proxiv} 
Let $x\in\Argmin\varphi\cap\IDD$, 
let $y\in\IDD$, and let $v=\prox_{\varphi}^fy$. Then
\begin{equation}
D^f(x,v)+D^f(v,y)\leq D^f(x,y).
\end{equation}
\end{enumerate}
\end{lemma}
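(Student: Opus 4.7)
The plan is to handle the five assertions in sequence: first establish existence and uniqueness of the minimizer in the defining argmin (which gives (i) and (ii)), then extract an algebraic characterization via Fermat's rule (which gives (iii) and, as an immediate consequence, (iv)), and finally combine the resulting subgradient inclusion with a three-point identity to obtain the descent estimate~(v).

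For (i)--(ii), I fix $y\in\IDD$ and study the objective $\Phi_y\colon x\mapsto\varphi(x)+D^f(x,y)$. Since $\varphi$ is bounded below and $f$ is Legendre, $D^f(\cdot,y)$ is coercive and weakly lower semicontinuous on the reflexive space $\XX$; together with $\dom\varphi\cap\IDD\neq\emp$, which forces $\inf\Phi_y<\pinf$, the direct method produces a minimizer $v$. Essential strict convexity of $f$ on $\dom\partial f\supset\IDD$ makes $D^f(\cdot,y)$ strictly convex, so $\Phi_y$ is strictly convex on its effective domain and the minimizer is unique. Finiteness of $D^f(v,y)$ forces $v\in\IDD$, giving $\ran\prox_\varphi^f\subset\IDD$; conversely, for $y\notin\IDD$ one has $D^f(\cdot,y)\equiv\pinf$, so $\dom\prox_\varphi^f=\IDD$.

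For (iii), Fermat's rule gives $0\in\partial\Phi_y(v)$. Because $D^f(\cdot,y)$ is G\^ateaux differentiable on $\IDD$ with gradient $\nabla f(\cdot)-\nabla f(y)$, the sum rule (applicable since $D^f(\cdot,y)$ is continuous on $\IDD$) yields $0\in\partial\varphi(v)+\nabla f(v)-\nabla f(y)$, i.e., $\nabla f(y)\in(\nabla f+\partial\varphi)(v)$, which rearranges to $v=(\nabla f+\partial\varphi)^{-1}(\nabla f(y))$. Item (iv) is then immediate: $x=\prox_\varphi^f x$ with $x\in\IDD$ iff $\nabla f(x)\in\nabla f(x)+\partial\varphi(x)$, iff $0\in\partial\varphi(x)$, iff $x\in\Argmin\varphi$.

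For (v), the central tool is the three-point identity
\begin{equation*}
D^f(x,y)-D^f(x,v)-D^f(v,y)=\pair{x-v}{\nabla f(v)-\nabla f(y)},
\end{equation*}
valid for $x\in\dom f$ and $y,v\in\IDD$ and verified by expanding each Bregman term. By (iii), $\nabla f(y)-\nabla f(v)\in\partial\varphi(v)$, so the subgradient inequality gives $\varphi(x)-\varphi(v)\geq\pair{x-v}{\nabla f(y)-\nabla f(v)}$. Since $x\in\Argmin\varphi$ and $v\in\dom\varphi$, the left-hand side is nonpositive, hence $\pair{x-v}{\nabla f(v)-\nabla f(y)}\geq 0$; substituting into the identity yields the claim. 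The main technical hurdle is the rigorous justification of the sum rule and of strict convexity of $D^f(\cdot,y)$ in the infinite-dimensional Banach/Legendre setting, both of which rest on essential strict convexity of $f$ and G\^ateaux differentiability of $D^f(\cdot,y)$ on $\IDD$, which are guaranteed by the Legendre hypothesis.
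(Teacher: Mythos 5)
The paper itself states this lemma without proof, as a quotation of \cite[Section~3]{IIBBC03}, so your attempt is measured against the standard argument there; your outline parallels it except at the one point where the Legendre hypothesis actually does the work, and there your argument has a genuine gap. The step ``finiteness of $D^f(v,y)$ forces $v\in\IDD$'' is false: for $y\in\IDD$ the function $D^f(\cdot,y)=f-f(y)-\pair{\cdot\,-y}{\nabla f(y)}$ is finite on \emph{all} of $\dom f$, so finiteness only yields $v\in\dom f$, and the range of $\prox_\varphi^f$ could a priori meet the boundary. Concretely, take $\XX=\RR$, $f$ the Boltzmann--Shannon entropy $\xi\mapsto\xi\ln\xi-\xi$ (so $\dom f=\RP$, $\IDD=\RPP$), and $\varphi$ the indicator of $\{0\}$, which is bounded below: for $y\in\RPP$ the objective $\varphi+D^f(\cdot,y)$ is proper and coercive with unique minimizer $v=0\notin\IDD$, yet $D^f(0,y)=y<\pinf$. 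Your existence-plus-finiteness reasoning runs verbatim in this example and ``proves'' $0\in\IDD$; the lemma itself is not contradicted only because here $\dom\varphi\cap\IDD=\emp$, but your argument for (i)--(ii) uses that hypothesis solely to get properness, so it cannot be sound. This defect also makes the proposal circular: in (iii) you apply the sum rule and immediately write $\nabla f(v)$, i.e., you assume differentiability of $f$ at $v$, which is exactly the content of (ii) that was never established.

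The correct order of deductions, as in \cite{IIBBC03}, is the reverse of yours: Fermat's rule plus the Rockafellar sum rule (legitimate because $f$, hence $D^f(\cdot,y)$, is continuous on the open set $\IDD$ and $\dom\varphi\cap\IDD\neq\emp$) gives $\nabla f(y)\in\partial\varphi(v)+\partial f(v)$; in particular $\partial f(v)\neq\emp$, and essential smoothness of the Legendre function $f$ on the reflexive space $\XX$ forces $\dom\partial f=\IDD$ with $\partial f$ single-valued there (cf.\ \cite[Theorem~5.6]{IIBBC01}), whence $v\in\IDD$ --- this establishes (ii) and (iii) simultaneously. Only then does essential strict convexity give uniqueness, since $f$ is guaranteed strictly convex merely on convex subsets of $\dom\partial f$; your invocation of strict convexity of $\Phi_y$ ``on its effective domain'' is overstated for the same reason, as that domain may contain boundary points of $\dom f$. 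Your remaining pieces are fine: coercivity of $D^f(\cdot,y)$ does hold, via $\nabla f(y)\in\IDDS$ \cite[Theorem~5.10]{IIBBC01} and the Moreau--Rockafellar criterion for bounded sublevel sets; (iv) is indeed immediate from (iii); and your proof of (v) by the three-point identity combined with the subgradient inequality at $v$ is exactly the standard computation (it is the identity behind \cite[Proposition~2.3(ii)]{IIBBC01} and \eqref{IIe:54}).
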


The following result in an extension of 
\cite[Proposition~23.30]{IIBC11_B}.

\begin{proposition}
\label{IIl:proxprod}
Let $m$ be a strictly positive integer, let 
$(\mathcal{X}_i)_{1\leq i\leq m}$ be reflexive real 
Banach spaces, and let $\XX$ be the vector 
product space $\Cart_{\!\!i=1}^{\!\!m}\XX_i$ equipped 
with the norm $x=(x_i)_{1\leq i\leq m}
\mapsto\sqrt{\sum_{i=1}^m\|x_i\|^2}$. 
For every $i\in\{1,\ldots,m\}$, 
let $\varphi_i\in\Gamma_0(\XX_i)$ be bounded from below 
and let $f_i\in\Gamma_0(\XX_i)$ be a Legendre function 
such that $\dom\varphi_i\cap\IDD_i\neq\emp$. 
Set $f\colon\XX\to\RX
\colon x\mapsto\sum_{i=1}^mf_i(x_i)$ 
and $\varphi\colon\XX\to\RX
\colon x\mapsto\sum_{i=1}^m\varphi_i(x_i)$. 
Then
\begin{equation}
\Big(\forall x\in
\underset{i=1}{\overset{m}{\Cart}}\IDD_i\Big)\quad 
\prox_{\varphi}^{f}x
=\big(\prox_{\varphi_i}^{f_i}x_i\big)_{1\leq i\leq m}.
\end{equation}
\end{proposition}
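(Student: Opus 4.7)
The plan is to exploit the product structure to reduce the minimization defining $\prox_{\varphi}^f x$ to $m$ independent minimizations, one per factor $\XX_i$, and then to identify each factor minimization with a $D^{f_i}$-proximal computation via Lemma~\ref{IIl:prox}.

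First I would record the separability identities induced by the product structure. Since $f(x)=\sum_{i=1}^m f_i(x_i)$ and $\XX$ carries the product norm, one has $\IDD=\Cart_{i=1}^m\IDD_i$ and, on this set, $f$ is G\^ateaux differentiable with $\nabla f(x)=(\nabla f_i(x_i))_{1\le i\le m}$. Bilinearity of the product pairing then yields, for every $x\in\Cart_{i=1}^m\IDD_i$ and every $y\in\XX$,
\begin{equation}
D^f(y,x)=\sum_{i=1}^m D^{f_i}(y_i,x_i),
\end{equation}
the equality being understood in $\left[0,+\infty\right]$ (in particular both sides are $+\infty$ as soon as some $y_i\notin\dom f_i$). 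Combined with the separability $\varphi(y)=\sum_{i=1}^m\varphi_i(y_i)$, the objective in \eqref{IIe:dprox} decouples as
\begin{equation}
\varphi(y)+D^f(y,x)=\sum_{i=1}^m\bigl(\varphi_i(y_i)+D^{f_i}(y_i,x_i)\bigr).
\end{equation}

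Next I would apply Lemma~\ref{IIl:prox} coordinate-wise. For every $i\in\{1,\ldots,m\}$ the $i$-th summand $y_i\mapsto\varphi_i(y_i)+D^{f_i}(y_i,x_i)$ admits the unique minimizer $y_i^\star=\prox_{\varphi_i}^{f_i}x_i\in\dom\varphi_i\cap\IDD_i$. Setting $y^\star=(y_i^\star)_{1\le i\le m}$ and summing the coordinate-wise inequalities, I would obtain
\begin{equation}
\varphi(y)+D^f(y,x)\;\geq\;\sum_{i=1}^m\bigl(\varphi_i(y_i^\star)+D^{f_i}(y_i^\star,x_i)\bigr)=\varphi(y^\star)+D^f(y^\star,x)
\end{equation}
for every $y\in\XX$, with equality if and only if $y_i=y_i^\star$ for every $i$. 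This identifies $y^\star$ as the unique minimizer on $\XX$, and hence as $\prox_{\varphi}^{f}x$, which is exactly the claimed identity.

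The only point requiring some care rather than genuine difficulty is the bookkeeping behind the separability of $D^f$: one must handle $y\notin\dom f=\Cart_{i=1}^m\dom f_i$ so that both sides of the separability identity equal $+\infty$, and verify the gradient decomposition $\nabla f(x)=(\nabla f_i(x_i))_{1\le i\le m}$ from the product norm structure. Once these routine checks are in place, the proof reduces to the elementary observation that minimizing a separable sum on a product space is achieved coordinate-wise.
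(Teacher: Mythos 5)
Your proof is correct, but it follows a genuinely different route from the paper's. The paper does not decompose the objective function: it first verifies that the product function $f$ is itself Legendre, by decomposing $\partial f$ and $f^*$ coordinate-wise and invoking \cite[Theorems~5.4 and~5.6]{IIBBC01}, so that Lemma~\ref{IIl:prox} applies on $\XX$ and $p=\prox_{\varphi}^{f}x$ is well defined; it then proves $p=q=(\prox_{\varphi_i}^{f_i}x_i)_{1\leq i\leq m}$ by writing the subgradient characterization of Lemma~\ref{IIl:prox}\ref{IIl:proxiii} for $p$ (with test point $q$) and coordinate-wise for $q$ (with test point $p$), adding the two inequalities to get $\pair{p-q}{\nabla f(p)-\nabla f(q)}\leq 0$, and concluding $p=q$ from the strict monotonicity of $\nabla f$ furnished by essential strict convexity \cite[Theorem~2.4.4(ii)]{IIZa02_B}. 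Your argument instead splits the objective $\varphi(\cdot)+D^f(\cdot,x)$ into the separable sum $\sum_{i=1}^m\bigl(\varphi_i(y_i)+D^{f_i}(y_i,x_i)\bigr)$ and minimizes coordinate-wise, which is more elementary: it needs neither the Legendre property of the product $f$ nor any monotonicity argument, and it simultaneously establishes existence and uniqueness of the minimizer on $\XX$ (hence that $\prox_{\varphi}^{f}x$ is a well-defined point), rather than importing this from Lemma~\ref{IIl:prox}. What the paper's route buys in exchange is the by-product that $f$ is Legendre on the product space, a fact of independent use in the paper's framework (e.g., whenever one wants to quote the other assertions of Lemma~\ref{IIl:prox} for $\prox_{\varphi}^{f}$ itself). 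Two small points you should make explicit to seal your equality-case step: the coordinate minimum values $\varphi_i(y_i^\star)+D^{f_i}(y_i^\star,x_i)$ are finite because $y_i^\star\in\dom\varphi_i\cap\IDD_i$, so the summed inequality can only be tight when every summand is tight and single-valuedness of $\prox_{\varphi_i}^{f_i}$ then forces $y_i=y_i^\star$; and the identifications $\IDD=\Cart_{\!\!i=1}^{\!\!m}\IDD_i$ and $\nabla f(x)=(\nabla f_i(x_i))_{1\leq i\leq m}$, together with $\varphi,f\in\Gamma_0(\XX)$ and $\varphi$ bounded from below, which you correctly flagged as routine, are exactly what makes $\prox_{\varphi}^{f}$ meaningful in the sense of \eqref{IIe:dprox}.
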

\begin{proof}
First, we observe that $\XX^*$ is the vector 
product space $\Cart_{\!\!i=1}^{\!\!m}\XX_i^*$ equipped 
with the norm $x^*=(x_i^*)_{1\leq i\leq m}
\mapsto\sqrt{\sum_{i=1}^m\|x_i^*\|^2}$. 
Since, for every $i\in\{1,\ldots,m\}$, $\varphi_i$ is 
bounded from below, so is $\varphi$. 
Next, we derive from the definition of $f$ 
that $\dom f=\Cart_{\!\!i=1}^{\!\!m}\dom f_i$ 
and that
\begin{equation}
\partial f\colon\XX\to 
2^{\XX^*}\colon (x_i)_{1\leq i\leq m}
\mapsto\underset{i=1}{\overset{m}{\Cart}}\partial f_i(x_i).
\end{equation}
Thus, $\partial f$ is single-valued on 
\begin{equation}
\dom\partial f
=\underset{i=1}{\overset{m}{\Cart}}\dom\partial f_i
=\underset{i=1}{\overset{m}{\Cart}}\IDD_i
=\inte\Big(\underset{i=1}{\overset{m}{\Cart}}\dom f_i\Big)
=\IDD.
\end{equation}
Likewise, since 
\begin{equation}
f^*\colon\XX^*\to\RX\colon 
(x_i^*)_{1\leq i\leq m}\mapsto\sum_{i=1}^mf_i^*(x_i^*), 
\end{equation}
we deduce that $\partial f^*$ is single-valued on 
$\dom\partial f^*=\IDDS$. 
Consequently, \cite[Theorems~5.4 and~5.6]{IIBBC01} assert that 
$f$ is a Legendre function. In addition,
\begin{equation}
\dom\varphi\cap\IDD
=\Big(\underset{i=1}{\overset{m}{\Cart}}\dom\varphi_i\Big)
\cap\Big(\underset{i=1}{\overset{m}{\Cart}}\IDD_i\Big)
=\underset{i=1}{\overset{m}{\Cart}}(\dom\varphi_i
\cap\IDD_i)\neq\emp.
\end{equation}
Now Lemma~\ref{IIl:prox} asserts that 
$\prox_{\varphi}^{f}\colon\IDD\to\dom\varphi\cap\IDD$. 
For the remainder of the proof, let 
$x\in\IDD$, set $p=\prox^{f}_{\varphi}x$, and set 
$q=(\prox^{f_i}_{\varphi_i}x_i)_{1\leq i\leq m}$. 
Since Lemma~\ref{IIl:prox}\ref{IIl:proxiii} 
yields $\nabla f(x)-\nabla f(p)\in\partial\varphi(p)$, 
we deduce from \eqref{IIe:subdiff} that
\begin{equation}
\label{IIe:23}
(\forall z\in\dom\varphi)\quad
\pair{z-p}{\nabla f(x)-\nabla f(p)}
+\varphi(p)
\leq\varphi(z).
\end{equation}
Setting $z=q$ in \eqref{IIe:23} yields
\begin{equation}
\label{IIe:24}
\pair{q-p}{\nabla f(x)-\nabla f(p)}
+\varphi(p)\leq
\varphi(q).
\end{equation}
For every $i\in\{1,\ldots,m\}$, set 
$q_i=\prox_{\varphi_i}^{f_i}x_i$. The same characterization 
as in \eqref{IIe:23} yields
\begin{equation}
\label{IIe:25}
(\forall i\in\{1,\ldots,m\})(\forall z_i\in\dom\varphi_i)
\quad\pair{z_i-q_i}{\nabla f_i(x_i)
-\nabla f_i(q_i)} +\varphi_i(q_i)\leq\varphi_i(z_i).
\end{equation}
By summing these inequalities over $i\in\{1,\ldots,m\}$, we obtain
\begin{equation}
\label{IIe:26}
(\forall z\in\dom\varphi)\quad
\pair{z-q}{\nabla f(x)-\nabla f(q)}
+\varphi(q)\leq\varphi(z).
\end{equation}
Upon setting $z=p$ in \eqref{IIe:26}, 
we get
\begin{equation}
\label{IIe:27}
\pair{p-q}{\nabla f(x)-\nabla f(q)}+\varphi(q)
\leq\varphi(p).
\end{equation}
Adding \eqref{IIe:24} and \eqref{IIe:27} yields
\begin{equation}
\label{IIe:28}
\pair{p-q}{\nabla f(p)-\nabla f(q)}\leq 0.
\end{equation}
Suppose that $p\neq q$. 
Since $f$ is essentially strictly convex, 
$f$ is strictly convex on every convex subset 
of $\dom\partial f$. In particular, since 
$\IDD\subset\dom\partial f$, 
$f|_{\IDD}$ is strictly convex. 
Hence, by \cite[Theorem~2.4.4(ii)]{IIZa02_B}, 
$\nabla f$ is strictly monotone, i.e.,
\begin{equation}
\label{IIe:29}
\pair{p-q}{\nabla f(p)-\nabla f(q)}>0,
\end{equation}
and we reach a contradiction. Consequently, 
$p=q$ which proves the claim.
\end{proof}

Let us note that, even in Euclidean spaces, it may be easier to 
evaluate $\prox_{\varphi}^f$ than Moreau's usual 
proximity operator $\prox_{\varphi}$, which is
based on $f=\|\cdot\|^2/2$. We provide illustrations of such 
instances in the standard Euclidean space $\RR^m$.

\begin{example}
\label{IIex:1}
Let $\gamma\in\RPP$, let $\phi\in\Gamma_0(\RR)$ be such that 
$\dom\phi\cap\RPP\neq\emp$, and let $\vartheta$ 
be Boltzmann-Shannon entropy, i.e.,
\begin{equation}
\vartheta\colon\xi\mapsto
\begin{cases}
\xi\ln\xi-\xi,&\text{if}\;\;\xi\in\RPP;\\
0,&\text{if}\;\;\xi=0;\\
\pinf,&\text{otherwise}.
\end{cases}
\end{equation}
Set $\varphi\colon(\xi_i)_{1\leq i\leq m}\mapsto
\sum_{i=1}^m\phi(\xi_i)$ and $f\colon(\xi_i)_{1\leq i\leq m}
\mapsto\sum_{i=1}^m\vartheta(\xi_i)$. 
Note that $f$ is a Legendre function 
\cite[Theorem~5.12 and Example~6.5]{IIBB97} and hence, 
Lemma~\ref{IIl:prox} asserts that 
$\dom\prox_{\gamma\varphi}^f=\RPP^m$. 
Let $(\xi_i)_{1\leq i\leq m}\in\RPP^m$, set 
$(\eta_i)_{1\leq i\leq m}
=\prox_{\gamma\varphi}^f(\xi_i)_{1\leq i\leq m}$, 
let $W$ be the Lambert function \cite{IIlamb96}, 
i.e., the inverse of $\xi\mapsto\xi e^{\xi}$ on $\RP$, 
and let $i\in\{1,\ldots,m\}$. 
Then $\eta_i$ can be computed as follows.
\begin{enumerate}
\item
\label{IIex:1ii}
Let $\omega\in\RR$ and suppose that 
\begin{equation}
\phi\colon\xi\mapsto
\begin{cases}
\xi\ln\xi-\omega\xi,&\text{if}\;\;\xi\in\RPP;\\
0,&\text{if}\;\;\xi=0;\\
+\infty,&\text{otherwise}.
\end{cases}
\end{equation}
Then $\eta_i=\xi_i^{(\omega-1)/(\gamma+1)}$.
\item 
\label{IIex:1iii}
Let $p\in\left[1,+\infty\right[$ 
and suppose that either $\phi=|\cdot|^p/p$ or
\begin{equation}
\phi\colon\xi\mapsto
\begin{cases}
\xi^p/p,&\text{if}\;\;\xi\in\RP;\\
+\infty,&\text{otherwise}.
\end{cases}
\end{equation}
Then 
\begin{equation}
\eta_i=
\begin{cases}
\left(\dfrac{W(\gamma(p-1)\xi_i^{p-1})}{\gamma(p-1)}
\right)^{\frac{1}{p-1}},&\text{if}\;\;p\in
\left]1,+\infty\right[;\\[4mm]
\xi_ie^{-\gamma},&\text{if}\;\;p=1.
\end{cases}
\end{equation}
\item
\label{IIex:1v}
Let $p\in\left[1,+\infty\right[$ and suppose that
\begin{equation}
\phi\colon\xi\mapsto
\begin{cases}
\xi^{-p}/p,&\text{if}\;\;\xi\in\RPP;\\
+\infty,&\text{otherwise}.
\end{cases}
\end{equation}
Then 
\begin{equation}
\eta_i=\left(\frac{W(\gamma(p+1)\xi_i^{-p-1})}{\gamma(p+1)}
\right)^{\frac{-1}{p+1}}.
\end{equation}
\item
\label{IIex:1vi}
Let $p\in\left]0,1\right[$ and suppose that
\begin{equation}
\phi\colon\xi\mapsto
\begin{cases}
-\xi^p/p,&\text{if}\;\;\xi\in\RP;\\
+\infty,&\text{otherwise}.
\end{cases}
\end{equation} 
Then 
\begin{equation}
\eta_i=\Bigg(\frac{W(\gamma(1-p)\xi_i^{p-1})}{\gamma(1-p)}
\Bigg)^{\frac{1}{p-1}}.
\end{equation}
\end{enumerate}
\end{example}

\begin{example}
\label{IIex:2}
Let $\phi\in\Gamma_0(\RR)$ be such that 
$\dom\phi\cap\left]0,1\right[\neq\emp$ 
and let $\vartheta$ be Fermi-Dirac entropy, i.e.,
\begin{equation}
\vartheta\colon\xi\mapsto
\begin{cases}
\xi\ln\xi-(1-\xi)\ln(1-\xi),&\text{if}\;\;\xi\in\left]0,1\right[;\\
0&\text{if}\;\;\xi\in\{0,1\};\\
\pinf,&\text{otherwise}.
\end{cases}
\end{equation}
Set $\varphi\colon(\xi_i)_{1\leq i\leq m}
\mapsto\sum_{i=1}^m\phi(\xi_i)$ 
and $f\colon(\xi_i)_{1\leq i\leq m}
\mapsto\sum_{i=1}^m\vartheta(\xi_i)$. 
Note that $f$ is a Legendre function 
\cite[Theorem~5.12 and Example~6.5]{IIBB97} and hence, 
Lemma~\ref{IIl:prox} asserts that 
$\dom\prox_{\varphi}^f=\left]0,1\right[^m$. 
Let $(\xi_i)_{1\leq i\leq m}\in\left]0,1\right[^m$, 
set $(\eta_i)_{1\leq i\leq m}
=\prox_{\varphi}^f(\xi_i)_{1\leq i\leq m}$, 
and let $i\in\{1,\ldots,m\}$. 
Then $\eta_i$ can be computed as follows.
\begin{enumerate}
\item
\label{IIex:2i}
Let $\omega\in\RR$ and suppose that
\begin{equation}
\phi\colon\xi\mapsto
\begin{cases}
\xi\ln\xi-\omega\xi,&\text{if}\;\;\xi\in\RPP;\\
0,&\text{if}\;\;\xi=0;\\
\pinf,&\text{otherwise}.
\end{cases}
\end{equation}
Then $\eta_i=e^{\omega}(2-2\xi_i)^{-1}
(-\xi_i+\sqrt{4\xi_i-3\xi_i^2})$.

\item
\label{IIex:2ii}
Suppose that
\begin{equation}
\phi\colon\xi\mapsto
\begin{cases}
(1-\xi)\ln(1-\xi)+\xi,&\text{if}\;\;\xi\in\left]-\infty,1\right[;\\
1&\text{if}\;\;\xi=1;\\
\pinf,&\text{otherwise}.
\end{cases}
\end{equation}
Then $\eta_i=1/2+\xi_i^{-1}/2-\sqrt{\xi_i^{-2}/4
+\xi_i^{-1}/2-3/4}$.
\end{enumerate}
\end{example}

\begin{example}
\label{IIex:3}
Let $\phi\in\Gamma_0(\RR)$ be such that 
$\dom\phi\cap\RPP\neq\emp$ and let $\vartheta$ 
be Burg entropy, i.e.,
\begin{equation}
\vartheta\colon\xi\mapsto
\begin{cases}
-\ln\xi,&\text{if}\;\xi\in\RPP;\\
\pinf,&\text{otherwise}.
\end{cases}
\end{equation}
Set $\varphi\colon(\xi_i)_{1\leq i\leq m}\mapsto
\sum_{i=1}^m\phi(\xi_i)$ and $f\colon(\xi_i)_{1\leq i\leq m}
\mapsto\sum_{i=1}^m\vartheta(\xi_i)$. 
Note that $f$ is a Legendre function 
\cite[Theorem~5.12 and Example~6.5]{IIBB97} and hence, 
Lemma~\ref{IIl:prox} asserts that 
$\dom\prox_{\varphi}^f=\RPP^m$. 
Let $(\xi_i)_{1\leq i\leq m}\in\RPP^m$, 
set $(\eta_i)_{1\leq i\leq m}
=\prox_{\varphi}^f(\xi_i)_{1\leq i\leq m}$, 
and let $i\in\{1,\ldots,m\}$. 
Then $\eta_i$ can be computed 
as follows.
\begin{enumerate}
\item
\label{IIex:3i}
Let $\gamma\in\RPP$ and suppose that $\phi=\gamma\vartheta$. 
Then $\eta_i=(1+\gamma)\xi_i$.

\item
\label{IIex:3ii}
Let $(\gamma,\alpha)\in\RP^2$, let $\omega\in\RR$, 
and suppose that
\begin{equation}
\phi\colon\xi\mapsto
\begin{cases}
-\gamma\ln\xi+\omega\xi+\alpha\xi^{-1},&\text{if}
\quad\xi\in\RPP;\\
\pinf,&\text{otherwise}.
\end{cases}
\end{equation}
Then $\eta_i=(2+2\omega\xi_i)^{-1}((\gamma+1)\xi_i
+\sqrt{(\gamma+1)^2\xi_i+4\alpha\xi_i(1+\omega\xi_i)})$.

\item 
\label{IIex:3iii}
Let $(\gamma,\alpha)\in\RP^2$, let $p\in\left[1,+\infty\right[$, 
and suppose that
\begin{equation}
\phi\colon\xi\mapsto
\begin{cases}
-\gamma\ln\xi+\alpha\xi^p,&\text{if}
\quad\xi\in\RPP;\\
\pinf,&\text{otherwise}.
\end{cases}
\end{equation}
Then $\eta_i$ is the strictly positive solution of 
$p\alpha\xi_i\eta^p+\rho=(\gamma+1)\xi_i$.

\item
\label{IIex:3iv}
Let $\alpha\in\RP$, let $p\in\left[1,+\infty\right[$, 
and suppose that
\begin{equation}
\phi\colon\xi\mapsto
\begin{cases}
\alpha\xi^{-p},&\text{if}
\quad\xi\in\RPP;\\
\pinf,&\text{otherwise}.
\end{cases}
\end{equation}
Then $\eta_i$ is the strictly positive solution of 
$p\eta^{p+1}-\xi_i\eta^p=\alpha p\xi_i$.
\end{enumerate}
\end{example}

\begin{example}
\label{IIex:4}
Let $f\colon(\xi_i)_{1\leq i\leq m}\mapsto\sum_{i=1}^m
\vartheta(\xi_i)$, 
where $\vartheta$ is Hellinger-like function, i.e.,
\begin{equation}
\vartheta\colon\xi\mapsto
\begin{cases}
-\sqrt{1-\xi^2},&\text{if}\;\;
\xi\in\left[-1,1\right];\\
\pinf,&\text{otherwise},
\end{cases}
\end{equation}
let $\gamma\in\RPP$, and let $\varphi=f$. 
Note that $f$ is a Legendre function 
\cite[Theorem~5.12 and Example~6.5]{IIBB97} and hence, 
Lemma~\ref{IIl:prox} asserts that 
$\dom\prox_{\gamma\varphi}^f=\left]-1,1\right[^m$. 
Let $(\xi_i)_{1\leq i\leq m}\in\left]-1,1\right[^m$ 
and set $(\eta_i)_{1\leq i\leq m}
=\prox_{\gamma\varphi}^f(\xi_i)_{1\leq i\leq m}$. 
Then $(\forall i\in\{1,\ldots,m\})$ 
$\eta_i=\xi_i/\sqrt{(\gamma+1)^2
+(\gamma^2+2\gamma+2)\xi_i^2}$.
\end{example}

\section{Applications}
\label{IIsec:app}

\subsection{Variable Bregman proximal point algorithm}
\noindent
The convex minimization problem, i.e., 
the problem of minimizing a convex function, 
can be solved by proximal point algorithm 
(see \cite{IIBC11_B,IICV13b} for Hilbertian setting 
and \cite{IIBBC03} for Banach space setting). 
In this section, we develop a proximal point algorithm 
which employs different Bregman distances at each iteration. 
This provides a unified framework 
for existing proximal point algorithms.

\begin{theorem}
\label{IIt:1}
Let $\XX$ be a reflexive real Banach space, 
let $\varphi\in\Gamma_0(\XX)$, let $f\in\Gamma_0(\XX)$ 
be a Legendre function such that $\Argmin\varphi\cap\IDD\neq\emp$, 
let $(\eta_n)_{n\in\NN}\in\ell_{+}^1(\NN)$, 
let $\alpha\in\RPP$, and let $(f_n)_{n\in\NN}$ 
be Legendre functions in $\BP_{\alpha}(f)$ such that 
\begin{equation}
\label{IIe:36a}
(\forall n\in\NN)\quad(1+\eta_n)f_n\succcurlyeq f_{n+1}.
\end{equation}
Let $x_0\in\IDD$, let $(\gamma_n)_{n\in\NN}\in\RPP^{\NN}$ 
be such that $\gamma=\inf_{n\in\NN}\gamma_n>0$, and iterate
\begin{equation}
\label{IIal:1}
(\forall n\in\NN)\quad x_{n+1}=\prox_{\gamma_n\varphi}^{f_n}x_n.
\end{equation}
Then the following hold:
\begin{enumerate}
\item
\label{IIt:1i}
$(x_n)_{n\in\NN}$ is stationarily Bregman monotone with 
respect to $\Argmin\varphi$ relative to $(f_n)_{n\in\NN}$.
\item
\label{IIt:1ii} 
$(x_n)_{n\in\NN}$ is a minimizing sequence of $\varphi$.
\item
\label{IIt:1iii}
Suppose that, for every $x\in\IDD$, 
$D^f(x,\cdot)$ is coercive, and that one of the following holds:
\begin{enumerate}
\item
\label{IIt:1iiia}
$\Argmin\varphi\cap\overline{\dom}f$ is a singleton.
\item
\label{IIt:1iiib}
Either $\Argmin\varphi\subset\IDD$ 
or $\dom f^*$ is open and $\nabla f^*$ 
is weakly sequentially continuous, 
there exists $g\in\BF(f)$ 
such that, for every $n\in\NN$, $g\succcurlyeq f_n$, 
and, for every $x_1\in\XX$ and every $x_2\in\XX$,
\begin{equation}
\label{IIe:4r}
\begin{cases}
x_1\in\mathfrak{W}(x_n)_{n\in\NN}\\
x_2\in\mathfrak{W}(x_n)_{n\in\NN}\\
\big(\Pair{x_1-x_2}{\nabla f_n(x_n)}\big)_{n\in\NN}
\quad\text{converges}
\end{cases}
\Rightarrow\quad x_1=x_2.
\end{equation}
\end{enumerate}
Then there exists $\overline{x}\in\Argmin\varphi$ 
such that $x_n\weakly\overline{x}$.
\item
\label{IIt:1iv}
Suppose that that $f$ satisfies Condition~\ref{IIcd:2} 
and that $(\forall x\in\IDD)$ $D^f(x,\cdot)$ is coercive. 
Furthermore, assume that $\varliminf D^f_{\Argmin\varphi}(x_n)=0$ 
and that there exists $\beta\in\RPP$ such that 
$(\forall n\in\NN)$ $\beta\hat{f}\succcurlyeq f_n$. 
Then there exists $\overline{x}\in\Argmin\varphi$ 
such that $x_n\to\overline{x}$.
\end{enumerate}
\end{theorem}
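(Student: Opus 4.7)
The plan is to reduce each claim to the propositions of Section~\ref{IIsec:VBMS} after establishing the appropriate Bregman monotonicity of $(x_n)_{n\in\NN}$. For \ref{IIt:1i}, I would apply Lemma~\ref{IIl:prox}\ref{IIl:proxiv} at each iteration, with $\varphi$ replaced by $\gamma_n\varphi$ and $f$ replaced by $f_n$: since $\Argmin(\gamma_n\varphi)=\Argmin\varphi$ and $f_n$ is Legendre, the lemma gives, for every $x\in\Argmin\varphi\cap\IDD$,
\begin{equation}
D^{f_n}(x,x_{n+1})+D^{f_n}(x_{n+1},x_n)\leq D^{f_n}(x,x_n).
\end{equation}
Dropping the nonnegative term $D^{f_n}(x_{n+1},x_n)$ and invoking \eqref{IIe:36a}, which gives $D^{f_{n+1}}(x,x_{n+1})\leq(1+\eta_n)D^{f_n}(x,x_{n+1})$, then yields the stationarily quasi-Bregman monotonicity inequality \eqref{IIe:sqbm} with $\varepsilon_n=0$.

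For \ref{IIt:1ii}, the first-order optimality $\nabla f_n(x_n)-\nabla f_n(x_{n+1})\in\gamma_n\partial\varphi(x_{n+1})$ from Lemma~\ref{IIl:prox}\ref{IIl:proxiii}, together with the subgradient inequality and the three-point identity for $D^{f_n}$, sharpens the above to
\begin{equation}
\gamma_n(\varphi(x_{n+1})-\min\varphi)\leq D^{f_n}(x,x_n)-D^{f_n}(x,x_{n+1})-D^{f_n}(x_{n+1},x_n)
\end{equation}
for every $x\in\Argmin\varphi\cap\IDD$, and the same argument with $y=x_n$ in place of $x$ shows $(\varphi(x_n))_{n\in\NN}$ is nonincreasing. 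Setting $\tau_n=\prod_{k<n}(1+\eta_k)$, which is finite by \cite[Theorem~3.7.3]{IIKno56_B}, and using \eqref{IIe:36a} to rewrite the descent in the telescopable form $\gamma(\varphi(x_{n+1})-\min\varphi)/\tau_n+D^{f_{n+1}}(x,x_{n+1})/\tau_{n+1}\leq D^{f_n}(x,x_n)/\tau_n$ yields $\sum_n\gamma(\varphi(x_{n+1})-\min\varphi)/\tau_n\leq D^{f_0}(x,x_0)<+\infty$, whence $\varphi(x_n)\to\min\varphi$ since $\gamma>0$ and $(\tau_n)$ is bounded.

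For \ref{IIt:1iii}, I would apply Proposition~\ref{IIpp:qb2} with $C=\Argmin\varphi$: its quasi-Bregman monotonicity is furnished by \ref{IIt:1i}, the boundedness of $(x_n)_{n\in\NN}$ by Proposition~\ref{IIpp:qb1}\ref{IIpp:qb1ii}, and weak lower semicontinuity of $\varphi$ together with \ref{IIt:1ii} places $\mathfrak{W}(x_n)_{n\in\NN}\subset\Argmin\varphi$. Under \ref{IIt:1iiia}, $(x_n)_{n\in\NN}\subset\IDD\subset\overline{\dom}f$ forces $\mathfrak{W}(x_n)_{n\in\NN}\subset\overline{\dom}f\cap\Argmin\varphi$, which is a singleton, so boundedness and reflexivity yield weak convergence directly. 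Under \ref{IIt:1iiib}, the supplementary assumptions match those of Proposition~\ref{IIpp:qb2}, so it suffices to verify $\mathfrak{W}(x_n)_{n\in\NN}\subset\IDD$; this is immediate when $\Argmin\varphi\subset\IDD$, and in the alternative sub-case the identity $D^{f^*}(\nabla f(x_n),\nabla f(x))=D^f(x,x_n)$ from \cite[Lemma~7.3(vii)]{IIBBC01}, combined with coercivity of $D^{f^*}(\cdot,\nabla f(x))$, shows that $(\nabla f(x_n))_{n\in\NN}$ is bounded; passing to a weakly convergent subsequence $\nabla f(x_{k_n})\weakly\bar u\in\dom f^*$ and applying weak sequential continuity of $\nabla f^*$ together with $\nabla f^*\circ\nabla f=\Id$ on $\IDD$ yields $x_{k_n}\weakly\nabla f^*(\bar u)\in\ran\nabla f^*=\IDD$.

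For \ref{IIt:1iv}, I would invoke Proposition~\ref{IIpp:qb3} with $C=\Argmin\varphi$: \ref{IIt:1i} supplies the required stationarily quasi-Bregman monotonicity and all other hypotheses are in the statement, so the conclusion delivers strong convergence to a point of $\Argmin\varphi\cap\overline{\dom}f\subset\Argmin\varphi$. The main obstacle in the overall argument is the $\IDD$-membership of weak cluster points in the Legendre sub-case of \ref{IIt:1iiib}, where one must leverage the conjugate-distance identity together with the range property of $\nabla f^*$; the rest consists of routine three-point manipulations and direct appeals to the preceding propositions.
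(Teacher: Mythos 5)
Your plan follows the paper's architecture closely, but there is one genuine gap in part \ref{IIt:1i}, and it propagates to part \ref{IIt:1iv}. Lemma~\ref{IIl:prox}\ref{IIl:proxiv} is stated only for $x\in\Argmin\varphi\cap\IDD$, so your route yields $D^{f_{n+1}}(x,x_{n+1})\leq(1+\eta_n)D^{f_n}(x,x_n)$ only for such $x$. However, Definition~\ref{IIdef:1} requires \eqref{IIe:sqbm} for \emph{every} $x\in C\cap\dom f$, here for every $x\in\Argmin\varphi\cap\dom f$, and this extra strength is actually consumed when you invoke Proposition~\ref{IIpp:qb3} in \ref{IIt:1iv}: its proof sets $\zeta_n=\inf_{x\in C\cap\dom f}D^{f_n}(x,x_n)$ and takes the infimum of the monotonicity inequality over all of $C\cap\dom f$ in order to squeeze $\zeta_n$ between $\alpha\varrho_n$ and $\beta\varrho_n$ with $\varrho_n=D^f_C(x_n)$. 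If $\Argmin\varphi\cap(\dom f\setminus\IDD)\neq\emp$, your weaker version of (i) does not feed that proposition. The paper avoids Lemma~\ref{IIl:prox}\ref{IIl:proxiv} here for precisely this reason: from $\nabla f_n(x_n)-\nabla f_n(x_{n+1})\in\gamma_n\partial\varphi(x_{n+1})$, the subgradient inequality, and the three-point identity \cite[Proposition~2.3(ii)]{IIBBC01}, it obtains $D^{f_n}(x,x_{n+1})\leq D^{f_n}(x,x_n)-D^{f_n}(x_{n+1},x_n)$ for every $x\in\Argmin\varphi\cap\dom f$ (the identity needs only $x_n,x_{n+1}\in\IDD$ and $x\in\dom f$). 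Since this is exactly the computation you already run in part \ref{IIt:1ii}, the repair is to carry it out on all of $\dom\varphi\cap\dom f$ and deduce (i) from it, rather than from the lemma. A second, smaller omission is in \ref{IIt:1iiib}: you assert that the weak subsequential limit $\bar u$ of the bounded sequence $(\nabla f(x_{k_n}))_{n\in\NN}$ lies in $\dom f^*$, but weak limits of bounded sequences need not lie in the domain, and weak sequential continuity of $\nabla f^*$ can only be applied at points of $\dom f^*$; the paper closes this via weak lower semicontinuity of the convex function $D^{f^*}(\cdot,\nabla f(x))$, giving $D^{f^*}(\bar u,\nabla f(x))\leq\varliminf D^f(x,x_{p_{k_n}})<\pinf$, before using openness of $\dom f^*$ and the bijection $\nabla f\colon\IDD\to\intdom f^*$ to conclude $\bar u=\nabla f(\bar x_1)$ with $\bar x_1\in\IDD$.

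Apart from these points, your decomposition matches the paper's: \ref{IIt:1iii} via weak lower semicontinuity of $\varphi$ to place $\mathfrak{W}(x_n)_{n\in\NN}\subset\Argmin\varphi$, the singleton case handled directly by boundedness and reflexivity, the remaining case through Proposition~\ref{IIpp:qb2}; and \ref{IIt:1iv} via Proposition~\ref{IIpp:qb3} with $C=\Argmin\varphi$ closed and convex. Your part \ref{IIt:1ii} is a correct genuine variant: the paper passes to the limit in \eqref{IIe:52} using the convergence of $(D^{f_n}(x,x_n))_{n\in\NN}$ and $D^{f_n}(x_{n+1},x_n)\to 0$, whereas your weighted telescoping with $\tau_n=\prod_{k<n}(1+\eta_k)$, bounded by \cite[Theorem~3.7.3]{IIKno56_B}, yields the stronger conclusion $\sum_{n\in\NN}\big(\varphi(x_{n+1})-\inf\varphi(\XX)\big)<\pinf$, from which the minimizing property is immediate. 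Do also record, as the paper does at the outset, the induction showing that $(x_n)_{n\in\NN}$ is well defined and remains in $\IDD$, which is what licenses applying Lemma~\ref{IIl:prox} with $f_n$ at each step.
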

\begin{proof}
First, for every $n\in\NN$, since 
$\emp\neq\Argmin\varphi\cap\IDD\subset\dom\varphi\cap\IDD
=\dom\varphi\cap\IDD_n$, Lemma~\ref{IIl:prox} asserts that 
\begin{equation}
\label{IIe:36b}
\prox_{\gamma_n\varphi}^{f_n}\colon\IDD_n\to
\dom\partial\varphi\cap\IDD_n
\end{equation}
is well-defined and single-valued. 
Note that $x_0\in\IDD$. 
Suppose that $x_n\in\IDD$ for some $n\in\NN$. 
Then $x_n\in\IDD_n$, and hence, we deduce from 
\eqref{IIe:36b} that 
$x_{n+1}\in\dom\partial\varphi\cap\IDD_n\subset\IDD$. 
By reasoning by induction, we conclude that
\begin{equation}
\label{IIe:def}
(x_n)_{n\in\NN}\in\big(\IDD\big)^{\NN}
\quad\text{is well-defined}.
\end{equation}

\ref{IIt:1i}: 
We first derive from \eqref{IIal:1} and 
Lemma~\ref{IIl:prox}\ref{IIl:proxiii} that
\begin{equation}
\label{IIe:41}
(\forall n\in\NN)\quad\nabla f_n(x_n)-\nabla f_n(x_{n+1})
\in\gamma_n\partial\varphi(x_{n+1}).
\end{equation}
Next, by invoking \eqref{IIe:subdiff} and \eqref{IIe:41}, we get
\begin{equation}
(\forall x\in\dom\varphi\cap\dom f)(\forall n\in\NN)\quad
\gamma_n^{-1}\Pair{x-x_{n+1}}{\nabla f_n(x_n)-\nabla f_n(x_{n+1})}
+\varphi(x_{n+1})\leq\varphi(x).
\end{equation}
It therefore follows from \cite[Proposition~2.3(ii)]{IIBBC01} that
\begin{multline}
\label{IIe:50}
(\forall x\in\dom\varphi\cap\dom f)(\forall n\in\NN)\quad
\gamma_n^{-1}\big(D^{f_n}(x,x_{n+1})+D^{f_n}(x_{n+1},x_n)
-D^{f_n}(x,x_n)\big)\\
+\varphi(x_{n+1})\leq\varphi(x),
\end{multline}
and, in particular,
\begin{equation}
\label{IIe:42}
(\forall x\in\Argmin\varphi\cap\dom f)(\forall n\in\NN)\quad 
D^{f_n}(x,x_{n+1})\leq D^{f_n}(x,x_n)-D^{f_n}(x_{n+1},x_n).
\end{equation}
Since \eqref{IIe:36a} yields
\begin{equation}
\label{IIe:43}
(\forall x\in\Argmin\varphi\cap\dom f)(\forall n\in\NN)\quad 
D^{f_{n+1}}(x,x_{n+1})\leq (1+\eta_n)D^{f_n}(x,x_{n+1}),
\end{equation}
it follows from \eqref{IIe:42} that
\begin{multline}
\label{IIe:44}
(\forall x\in\Argmin\varphi\cap\dom f)(\forall n\in\NN)\quad 
D^{f_{n+1}}(x,x_{n+1})\leq(1+\eta_n)D^{f_n}(x,x_n)\\
-(1+\eta_n)D^{f_n}(x_{n+1},x_n).
\end{multline}
In particular,
\begin{equation}
\label{IIe:45}
(\forall x\in\Argmin\varphi\cap\dom f)(\forall n\in\NN)\quad 
D^{f_{n+1}}(x,x_{n+1})\leq (1+\eta_n)D^{f_n}(x,x_n).
\end{equation}
This shows that $(x_n)_{n\in\NN}$ is stationarily 
Bregman monotone with respect to $\Argmin\varphi$ 
relative to $(f_n)_{n\in\NN}$.

\ref{IIt:1ii}: 
Let $x\in\Argmin\varphi\cap\IDD$. It follows from \ref{IIt:1i} 
and Proposition~\ref{IIpp:qb1}\ref{IIpp:qb1i} that
\begin{equation}
\label{IIe:46}
\big(D^{f_n}(x,x_n)\big)_{n\in\NN}\quad\text{converges}
\end{equation}
and, since \eqref{IIe:44} yields
\begin{align}
(\forall n\in\NN)\quad 
D^{f_n}(x_{n+1},x_n)&\leq(1+\eta_n)D^{f_n}(x_{n+1},x_n)\nonumber\\
&\leq(1+\eta_n)D^{f_n}(x,x_n)-D^{f_{n+1}}(x,x_{n+1}),
\end{align}
we deduce that
\begin{equation}
\label{IIe:47}
D^{f_n}(x_{n+1},x_n)\to 0.
\end{equation}
On the other hand, since $(f_n)_{n\in\NN}$ 
is in $\BP_{\alpha}(f)$, we obtain
\begin{equation}
\label{IIe:48}
(\forall n\in\NN)\quad
\alpha D^f(x_{n+1},x_n)\leq D^{f_n}(x_{n+1},x_n).
\end{equation}
Altogether, \eqref{IIe:47} and \eqref{IIe:48} yield 
\begin{equation}
\label{IIe:49}
D^f(x_{n+1},x_n)\to 0.
\end{equation}
We also deduce from \eqref{IIe:50} that
\begin{equation}
\label{IIe:51}
(\forall n\in\NN)\quad
\varphi(x_{n+1})\leq\gamma_n^{-1}
\big(D^{f_n}(x_n,x_{n+1})+D^{f_n}(x_{n+1},x_n)\big)
+\varphi(x_{n+1})\leq\varphi(x_n).
\end{equation}
This shows that $(\varphi(x_n))_{n\in\NN}$ is decreasing, 
and hence, since it is bounded from below by $\inf\varphi(\XX)$, 
it converges. We now derive from \eqref{IIe:50} 
and \eqref{IIe:43} that
\begin{align}
\label{IIe:52}
(\forall n\in\NN)\quad &\dfrac{1}{\gamma}
\Bigg(\dfrac{1}{1+\eta_n}D^{f_{n+1}}(x,x_{n+1})
+D^{f_n}(x_{n+1},x_n)-D^{f_n}(x,x_n)\Bigg)
+\varphi(x_{n+1})\nonumber\\
&\leq\dfrac{1}{\gamma_n}
\Bigg(\dfrac{1}{1+\eta_n}D^{f_{n+1}}(x,x_{n+1})
+D^{f_n}(x_{n+1},x_n)-D^{f_n}(x,x_n)\Bigg)
+\varphi(x_{n+1})\nonumber\\
&\leq\varphi(x).
\end{align}
Hence, by using \eqref{IIe:46} 
and \eqref{IIe:47} after letting $n\to+\infty$ 
in \eqref{IIe:52}, we get
\begin{equation}
\inf\varphi(\XX)\leq\lim\varphi(x_n)
\leq\varphi(x)=\inf\varphi(\XX).
\end{equation}
In turn, $\varphi(x_n)\to\inf\varphi(\XX)$, i.e., 
$(x_n)_{n\in\NN}$ is therefore a minimizing sequence of $\varphi$. 

\ref{IIt:1iii}: 
We show actually that 
$\mathfrak{W}(x_n)_{n\in\NN}\subset\Argmin\varphi$. 
To this end, suppose that $x\in\mathfrak{W}(x_n)_{n\in\NN}$, 
i.e., $x_{k_n}\weakly x$. Since 
$\varphi$ is lower semicontinuous and convex, it is weakly 
lower semicontinuous \cite[Theorem~2.2.1]{IIZa02_B}, and hence,
\begin{equation}
\inf\varphi(\XX)\leq\varphi(x)\leq\varliminf
\varphi(x_{k_n})=\inf\varphi(\XX).
\end{equation}
In turn, $\varphi(x)=\inf\varphi(\XX)$, 
i.e., $x\in\Argmin\varphi$.

\ref{IIt:1iiia}:
Since $\XX$ is reflexive, we derive from \ref{IIt:1i} 
and Proposition~\ref{IIpp:qb1}\ref{IIpp:qb1ii} 
that $\mathfrak{W}(x_n)_{n\in\NN}\neq\emp$. Let us fix 
$\overline{x}\in\mathfrak{W}(x_n)_{n\in\NN}$. 
Since \eqref{IIe:def} yields 
$\mathfrak{W}(x_n)_{n\in\NN}\subset\Argmin\varphi
\cap\overline{\dom}f$, we get 
$\mathfrak{W}(x_n)_{n\in\NN}=\{\overline{x}\}$. 
In turn, $x_n\weakly\overline{x}$.

\ref{IIt:1iiib}:
We shall show that 
$\mathfrak{W}(x_n)_{n\in\NN}\subset\IDD$. 
To this end, let $\overline{x}\in\mathfrak{W}(x_n)_{n\in\NN}$, 
i.e., $x_{k_n}\weakly\overline{x}$. 
If $\Argmin\varphi\subset\IDD$ then 
$\overline{x}\in\Argmin\varphi\subset\IDD$. 
Now suppose that $\dom f^*$ is open and $\nabla f^*$ 
is weakly sequentially continuous. 
Let $x\in\Argmin\varphi\cap\IDD$. 
Then $\nabla f(x)\in\IDD^*$ \cite[Theorem~5.9]{IIBBC01} 
and it follows from \cite[Lemma~7.3(v)]{IIBBC01} that 
$D^{f^*}(\cdot,\nabla f(x))$ is coercive. 
Since $(D^f(x,x_{k_n}))_{n\in\NN}$ is bounded 
and since \cite[Lemma~7.3(vii)]{IIBBC01} asserts that
\begin{equation}
\label{IIe:27t}
(\forall n\in\NN)\quad D^{f^*}(\nabla f(x_{k_n}),\nabla f(x))
=D^f(x,x_{k_n}),
\end{equation}
we deduce that $(\nabla f(x_{k_n}))_{n\in\NN}$ is bounded. 
Take $\overline{x}^*\in\XX^*$ and a strictly increasing 
sequence $(p_{k_n})_{n\in\NN}$ in $\NN$ such that 
$\nabla f(x_{p_{k_n}})\weakly\overline{x}^*$. 
Since \cite[Lemma~7.3(ii)]{IIBBC01} states that 
$D^{f^*}(\cdot,\nabla f(x))$ is a proper lower semicontinuous 
convex function, we derive from \eqref{IIe:27t} that
\begin{equation}
D^{f^*}(\overline{x}^*,\nabla f(x))\leq\varliminf 
D^{f^*}\big(\nabla f(x_{p_{k_n}}),\nabla f(x)\big)
\leq\varliminf D^f(x,x_{p_{k_n}})<+\infty,
\end{equation}
which shows that $\overline{x}^*\in\dom f^*=\IDD^*$ 
and thus, by \cite[Theorem~5.10]{IIBBC01}, 
there exists $\overline{x}_1\in\IDD$ 
such that $\overline{x}^*=\nabla f(\overline{x}_1)$. 
Since $\nabla f^*$ is weakly sequentially continuous, 
we get
\begin{equation}
\overline{x}\leftharpoonup x_{p_{k_n}}
=\nabla f^*\big(\nabla f(x_{p_{k_n}})\big)
\weakly\nabla f^*(\overline{x}^*)=\overline{x}_1.
\end{equation} 
In turn, $\overline{x}=\overline{x}_1\in\IDD$. 
Finally, the claim follows from Proposition~\ref{IIpp:qb2}. 

\ref{IIt:1iv}: 
Since $\varphi\in\Gamma_0(\XX)$, $\Argmin\varphi$ is 
convex and closed, and the assertion therefore 
follows from Proposition~\ref{IIpp:qb3}.
\end{proof}

\begin{remark}
In Theorem~\ref{IIt:1}, suppose that $(\forall n\in\NN)$ 
$f_n=\hat{f}$, $\gamma_n=\gamma$, and $\eta_n=0$. 
Then \eqref{IIal:1} reduces to the Bregman proximal iterations 
\cite{IIBBC03}
\begin{equation}
(\forall n\in\NN)\quad x_{n+1}=\prox_{\gamma\varphi}^fx_n.
\end{equation}
\end{remark}

\subsection{An application to the convex feasibility problem}
\label{IIsec:conv}
\noindent
In this section, we apply the asymptotic analysis of 
variable Bregman monotone sequences to study 
the convex feasibility problem, i.e., the generic problem of 
finding a point in the intersection of a family of closed 
convex sets. We first recall the following results.

\begin{lemma}{\rm\cite[Definition~3.1 and Proposition~3.3]{IIBBC03}}
\label{IIle:Bclass}
Let $\XX$ be a reflexive real Banach space, let 
$f\in\Gamma_0(\XX)$ be G\^ateaux differentiable on 
$\IDD\neq\emp$, set
\begin{align}
\label{IIe:54}
(\forall(x,y)\in(\IDD)^2)\quad 
H^f(x,y)&=\menge{z\in\XX}{\Pair{z-y}{\nabla f(x)-\nabla f(y)}\leq 0}
\nonumber\\
&=\menge{z\in\XX}{D^f(z,y)+D^f(y,x)\leq D^f(z,x)}
\end{align}
and
\begin{multline}
\mathfrak{B}(f)=\Big\{T\colon\XX\to 2^{\XX}\;\Big|\;
\ran T\subset\dom T=\IDD\\
\text{and}\;
(\forall (x,y)\in\gra T)\;\Fix T\subset H^f(x,y)\Big\}.
\end{multline}
Let $T\in\mathfrak{B}(f)$ be such that $\Fix T\neq\emp$. 
Suppose that $f|_{\IDD}$ is strictly convex. 
Then the following hold:
\begin{enumerate}
\item
$\Fix T$ is convex.
\item
\label{IIle:Bclassii}
$(\forall x\in\overline{\Fix}T)(\forall (y,v)\in\gra T)$ 
$D^f(x,v)+D^f(v,y)\leq D^f(x,y)$.
\end{enumerate}
\end{lemma}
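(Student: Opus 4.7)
The plan is to establish the two items in order, using the two equivalent descriptions of $H^f(x,y)$ recorded in \eqref{IIe:54}.

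For (i), I would use the inner-product form. Fix $x_1,x_2\in\Fix T$ and $\lambda\in\left]0,1\right[$, and set $x=\lambda x_1+(1-\lambda)x_2$. Since $\Fix T\subset\dom T=\IDD$ and $\IDD$ (the interior of the convex set $\dom f$) is convex, one has $x\in\dom T$, whence $Tx\neq\emp$; pick $v\in Tx$. The defining property of $\mathfrak{B}(f)$ then gives $x_1,x_2\in H^f(x,v)$, and taking the $\lambda$-convex combination of the two resulting inequalities yields
\begin{equation*}
\pair{x-v}{\nabla f(x)-\nabla f(v)}\leq 0.
\end{equation*}
On the other hand, convexity of $f|_{\IDD}$ makes $\nabla f$ monotone on $\IDD$, so the reverse inequality also holds and equality must prevail. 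Strict convexity of $f|_{\IDD}$ upgrades monotonicity to strict monotonicity of $\nabla f$ on $\IDD$ by \cite[Theorem~2.4.4(ii)]{IIZa02_B}, and equality therefore forces $x=v\in Tx$; thus $x\in\Fix T$.

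For (ii), I would switch to the Bregman-distance form of $H^f$. For $(y,v)\in\gra T$, the inclusion $\Fix T\subset H^f(y,v)$ reads
\begin{equation*}
(\forall z\in\Fix T)\quad D^f(z,v)+D^f(v,y)\leq D^f(z,y).
\end{equation*}
Now fix $x\in\overline{\Fix}T$. If $x\notin\dom f$, then $D^f(x,y)=\pinf$ and the desired inequality is trivial. Otherwise, choose a sequence $(z_n)_{n\in\NN}$ in $\Fix T$ with $z_n\to x$ and rewrite the above as $D^f(v,y)\leq D^f(z_n,y)-D^f(z_n,v)$. Since $y,v\in\IDD$, the $f(z_n)$ contributions on the right-hand side cancel, leaving an affine continuous functional of $z_n$; letting $n\to\pinf$ then yields $D^f(v,y)\leq D^f(x,y)-D^f(x,v)$.

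The only delicate point I anticipate is in (i), where the convex combination $x$ must lie in $\dom T$ so that a witness $v\in Tx$ can be extracted. This rests on the defining property $\dom T=\IDD$ of the class $\mathfrak{B}(f)$, combined with the convexity of $\IDD$. The remaining work in both parts is a routine deployment of monotonicity, strict monotonicity of $\nabla f$ on $\IDD$, and the three-point identity underlying the equivalence of the two descriptions of $H^f$.
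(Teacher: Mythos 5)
Your proof is correct and follows essentially the same route as the argument in the source the paper cites for this lemma (the paper itself states it without proof, referring to \cite{IIBBC03}): convexity of $\Fix T$ via the half-space description of $H^f(x,v)$ for a witness $v\in Tx$ at the convex combination, with monotonicity and strict monotonicity of $\nabla f$ on $\IDD$ forcing $x=v$; and the descent inequality on $\Fix T$ extended to $\overline{\Fix}T$ by continuity after the $f(z_n)$ terms cancel. The two delicate points you flag, namely that $x=\lambda x_1+(1-\lambda)x_2$ lies in $\dom T=\IDD$ so a witness $v\in Tx$ exists, and the trivial case $x\notin\dom f$ in (ii), are both handled correctly, so nothing is missing.
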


The class of operators $\mathfrak{B}$ includes types 
of fundamental operators  in Bregman optimization 
(see \cite{IIBBC03} for more discussions). 
We illustrate our result in Section~\ref{IIsec:VBMS} 
through an application to the problem 
of finding a common point of a family of closed convex 
subsets with nonempty intersection.

\begin{theorem}
\label{IIt:2}
Let $\XX$ be a reflexive real Banach space, 
let $I$ be a totally ordered at most countable index set, 
let $(C_i)_{i\in I}$ be a family of closed convex subsets 
of $\XX$ such that $C=\bigcap_{i\in I}C_i\neq\emp$, 
let $f\in\Gamma_0(\XX)$ be G\^ateaux differentiable on 
$\IDD\neq\emp$, let $(\eta_n)_{n\in\NN}\in\ell_+^1(\NN)$, 
let $\alpha\in\RPP$, and let $(f_n)_{n\in\NN}$ 
be Legendre functions in $\BP_{\alpha}(f)$ such that
\begin{equation}
\label{IIe:55}
(\forall n\in\NN)\quad 
(1+\eta_n)f_n\succcurlyeq f_{n+1}.
\end{equation} 
Let $\operatorname{i}\colon\NN\to I$ be such that
\begin{equation}
\label{IIe:56}
(\forall j\in I)(\exists M_j\in\NN\backslash\{0\})
(\forall n\in\NN)\quad j\in\{\operatorname{i}(n),\ldots,
\operatorname{i}(n+M_j-1)\}.
\end{equation}
For every $i\in I$, let $(T_{i,n})_{n\in\NN}$ be a sequence 
of operators such that
\begin{equation}
\label{IIe:57}
(\forall n\in\NN)\quad T_{i,n}\in\mathfrak{B}(f_n),
\quad C_i\cap\Fix T_{i,n}\neq\emp,\quad\text{and}\quad 
C_i\subset\overline{\Fix}T_{i,n}.
\end{equation}
Let $x_0\in\IDD$ and iterate
\begin{equation}
\label{IIal:2}
(\forall n\in\NN)\quad x_{n+1}\in T_{\operatorname{i}(n),n}x_n.
\end{equation}
Suppose that $f$ satisfies Condition~\ref{IIcd:2} 
and that $(\forall x\in\IDD)$ $D^f(x,\cdot)$ is coercive. 
Then there exists $\overline{x}\in C$ such that 
the following hold:
\begin{enumerate}
\item 
\label{IIt:2i}
Suppose that there exists $g\in\BF(f)$ 
that, for every $n\in\NN$, $g\succcurlyeq f_n$, and,
for every $x_1\in\XX$ and every $x_2\in\XX$,
\begin{equation}
\label{IIe:4q}
\begin{cases}
x_1\in\mathfrak{W}(x_n)_{n\in\NN}\cap C\\
x_2\in\mathfrak{W}(x_n)_{n\in\NN}\cap C\\
\big(\Pair{x_1-x_2}{\nabla f_n(x_n)}\big)_{n\in\NN}
\quad\text{converges}
\end{cases}
\Rightarrow\quad x_1=x_2,
\end{equation}
and that, for every strictly 
increasing sequence $(l_n)_{n\in\NN}$ in $\NN$, 
every $x\in\XX$, and every $j\in I$,
\begin{equation}
\label{IIe:59}
\begin{cases}
x_{l_n}\weakly x\\
y_{l_n}\in T_{j,l_n}x_{l_n}\\
y_{l_n}-x_{l_n}\to 0\\
(\forall n\in\NN)\; j=\operatorname{i}(l_n)
\end{cases}
\Rightarrow\quad x\in C_j.
\end{equation}
In addition, assume that $\mathfrak{W}(x_n)_{n\in\NN}\subset\IDD$. 
Then $x_n\weakly\overline{x}$.
\item 
\label{IIt:2ii}
Suppose that $f$ is Legendre, that $\varliminf D_C^f(x_n)=0$, 
and that there exists $\beta\in\RPP$ such that 
$(\forall n\in\NN)$ $\beta\hat{f}\succcurlyeq f_n$. 
Then $x_n\to\overline{x}$.
\end{enumerate}
\end{theorem}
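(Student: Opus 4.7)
The plan is to apply the abstract asymptotic results for quasi-Bregman monotone sequences (Propositions~\ref{IIpp:qb1}--\ref{IIpp:qb3}) to the iteration \eqref{IIal:2}. The backbone is the fact that every operator in $\mathfrak{B}(f_n)$ satisfies the strengthened Bregman inequality of Lemma~\ref{IIle:Bclass}\ref{IIle:Bclassii}, which, combined with \eqref{IIe:55}, forces stationary quasi-Bregman monotonicity of $(x_n)_{n\in\NN}$ with respect to $C$.

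First I would fix $x\in C\cap\dom f$ and, for each $n$, apply Lemma~\ref{IIle:Bclass}\ref{IIle:Bclassii} to $T_{\operatorname{i}(n),n}\in\mathfrak{B}(f_n)$, using $x\in C_{\operatorname{i}(n)}\subset\overline{\Fix}T_{\operatorname{i}(n),n}$ and $(x_n,x_{n+1})\in\gra T_{\operatorname{i}(n),n}$, to get
\begin{equation*}
D^{f_n}(x,x_{n+1})+D^{f_n}(x_{n+1},x_n)\leq D^{f_n}(x,x_n).
\end{equation*}
Combining with $(1+\eta_n)f_n\succcurlyeq f_{n+1}$ yields
\begin{equation*}
D^{f_{n+1}}(x,x_{n+1})\leq (1+\eta_n)D^{f_n}(x,x_n)-(1+\eta_n)D^{f_n}(x_{n+1},x_n),
\end{equation*}
so $(x_n)_{n\in\NN}$ is stationarily quasi-Bregman monotone with respect to $C$ relative to $(f_n)_{n\in\NN}$ (with $\varepsilon_n=0$). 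Proposition~\ref{IIpp:qb1} then gives convergence of $\big(D^{f_n}(x,x_n)\big)_{n\in\NN}$ and, via coercivity of $D^f(x,\cdot)$, boundedness of $(x_n)_{n\in\NN}$. The above inequality also forces $D^{f_n}(x_{n+1},x_n)\to 0$, hence $\alpha D^f(x_{n+1},x_n)\to 0$, and Condition~\ref{IIcd:2} upgrades this to $x_{n+1}-x_n\to 0$ in norm.

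For \ref{IIt:2i} I would invoke Proposition~\ref{IIpp:qb2}: assumption \eqref{IIe:4q} is exactly \eqref{IIe:4p}, and everything else is in place, so it only remains to verify $\mathfrak{W}(x_n)_{n\in\NN}\subset C\cap\IDD$. Fix $\overline{x}\in\mathfrak{W}(x_n)_{n\in\NN}$ with $x_{k_n}\weakly\overline{x}$ and fix $j\in I$; by \eqref{IIe:56} there exists $l_n\in\{k_n,\ldots,k_n+M_j-1\}$ with $\operatorname{i}(l_n)=j$. Passing to a subsequence I may assume $(l_n)_{n\in\NN}$ is strictly increasing. Since $x_{l_n}-x_{k_n}$ telescopes into at most $M_j-1$ consecutive increments, each tending to zero in norm by the preceding step, $x_{l_n}\weakly\overline{x}$. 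Setting $y_{l_n}=x_{l_n+1}\in T_{j,l_n}x_{l_n}$, we have $y_{l_n}-x_{l_n}\to 0$, and the demiclosedness-type property \eqref{IIe:59} forces $\overline{x}\in C_j$. Arbitrariness of $j$ gives $\overline{x}\in C$, and combined with the hypothesis $\mathfrak{W}(x_n)_{n\in\NN}\subset\IDD$ this yields $\mathfrak{W}(x_n)_{n\in\NN}\subset C\cap\IDD$; Proposition~\ref{IIpp:qb2} then delivers $x_n\weakly\overline{x}$. For \ref{IIt:2ii}, stationary quasi-Bregman monotonicity with respect to the closed convex set $C$, the Legendre property of $f$, Condition~\ref{IIcd:2}, coercivity of $D^f(x,\cdot)$, the bound $\beta\hat f\succcurlyeq f_n$, and $\varliminf D^f_C(x_n)=0$ together put us exactly in the setting of Proposition~\ref{IIpp:qb3}, whence $x_n\to\overline{x}\in C\cap\overline{\dom}f$.

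The main obstacle is the third paragraph: one must propagate the weak cluster point of $(x_{k_n})_{n\in\NN}$ along the control windows prescribed by \eqref{IIe:56} to land on a subsequence indexed by a given $j\in I$. This shift is made in the norm topology, so the crucial technical input is the passage from $D^{f_n}(x_{n+1},x_n)\to 0$ to $x_{n+1}-x_n\to 0$, which is precisely what Condition~\ref{IIcd:2} together with the $\BP_\alpha$ lower bound on $(f_n)_{n\in\NN}$ provides; all other steps are essentially bookkeeping that reduces the convex feasibility iteration to the abstract framework of Section~\ref{IIsec:VBMS}.
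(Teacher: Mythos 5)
Your proposal is correct and essentially reproduces the paper's own proof: stationary quasi-Bregman monotonicity via Lemma~\ref{IIle:Bclass}\ref{IIle:Bclassii} and \eqref{IIe:55}, the chain $D^{f_n}(x_{n+1},x_n)\to 0$, then $D^f(x_{n+1},x_n)\to 0$ and $x_{n+1}-x_n\to 0$ via the $\BP_\alpha$ bound and Condition~\ref{IIcd:2}, the windowing argument from \eqref{IIe:56} with $y_{l_n}=x_{l_n+1}$ and the demiclosedness property \eqref{IIe:59} to show $\mathfrak{W}(x_n)_{n\in\NN}\subset C\cap\IDD$, and finally Propositions~\ref{IIpp:qb2} and~\ref{IIpp:qb3} for the two convergence claims. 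No substantive deviation from the paper's argument, so nothing further to compare.
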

\begin{proof}
For every $n\in\NN$ and every $i\in I$, 
we observe that $\ran T_{i,n}\subset\dom T_{i,n}=\IDD_n=\IDD$. 
Hence, it follows from \eqref{IIe:57} and \eqref{IIal:2} 
that $(x_n)_{n\in\NN}$ is a well-define sequence in 
$\IDD$. We now derive from \eqref{IIe:54}, \eqref{IIe:57}, 
and \eqref{IIal:2} that
\begin{equation}
\label{IIe:60}
(\forall x\in C\cap\dom f)(\forall n\in\NN)\quad 
D^{f_n}(x,x_{n+1})+D^{f_n}(x_{n+1},x_n)\leq D^{f_n}(x,x_n).
\end{equation}
Since \eqref{IIe:55} yields
\begin{equation}
\label{IIe:61}
(\forall x\in C\cap\dom f)(\forall n\in\NN)\quad 
D^{f_{n+1}}(x,x_{n+1})\leq (1+\eta_n)D^{f_n}(x,x_{n+1}),
\end{equation}
we deduce that
\begin{equation}
\label{IIe:62}
(\forall x\in C\cap\dom f)(\forall n\in\NN)\quad 
D^{f_{n+1}}(x,x_{n+1})\leq
(1+\eta_n)D^{f_n}(x,x_n)-(1+\eta_n)D^{f_n}(x_{n+1},x_n).
\end{equation}
In particular,
\begin{equation}
(\forall x\in C\cap\dom f)(\forall n\in\NN)\quad 
D^{f_{n+1}}(x,x_{n+1})\leq(1+\eta_n)D^{f_n}(x,x_n),
\end{equation}
which shows that $(x_n)_{n\in\NN}$ is stationarily Bregman 
monotone with respect to $C$ relative to $(f_n)_{n\in\NN}$. 
In addition, we derive from \eqref{IIe:57} that 
$(\forall i\in\{1,\ldots,m\})$ $C_i\cap\IDD\neq\emp$. 
Hence, $C\cap\IDD\neq\emp$.

\ref{IIt:2i}: 
In view of Proposition~\ref{IIpp:qb2}, it suffices to 
show that $\mathfrak{W}(x_n)_{n\in\NN}\subset C\cap\IDD$. 
To this end, let $\overline{x}\in\mathfrak{W}(x_n)_{n\in\NN}$, 
let $(k_n)_{n\in\NN}$ be a strictly increasing sequence in $\NN$ 
such that $x_{k_n}\weakly\overline{x}$, let $j\in I$, and 
let $x\in C\cap\IDD$. 
By \eqref{IIe:56}, there exists a strictly increasing sequence 
$(l_n)_{n\in\NN}$ in $\NN$ such that 
\begin{equation}
\label{IIe:63}
(\forall n\in\NN)\quad 
\begin{cases}
k_n\leq l_n\leq k_n+M_j-1<k_{n+1}\leq l_{n+1},\\
j=\operatorname{i}(l_n).
\end{cases}
\end{equation}
Since $D^f(x,\cdot)$ is coercive, it follows from 
Proposition~\ref{IIpp:qb1} that $(x_n)_{\in\NN}$ is bounded 
and $(D^{f_n}(x_{n+1},x_n))_{n\in\NN}$ converges. 
In turn, since \eqref{IIe:62} yields
\begin{align}
(\forall n\in\NN)\quad 
D^{f_n}(x_{n+1},x_n)
&\leq(1+\eta_n)D^{f_n}(x_{n+1},x_n)\nonumber\\
&\leq(1+\eta_n)D^{f_n}(x,x_n)-D^{f_{n+1}}(x,x_{n+1}),
\end{align}
we deduce that
\begin{equation}
\label{IIe:63a}
D^{f_n}(x_{n+1},x_n)\to 0.
\end{equation}
However, since
\begin{equation}
(\forall n\in\NN)\quad\alpha D^f(x_{n+1},x_n)\leq D^{f_n}(x_{n+1},x_n),
\end{equation}
it follows from \eqref{IIe:63a} that
\begin{equation}
D^f(x_{n+1},x_n)\to 0
\end{equation}
and hence, since $f$ satisfies Condition~\ref{IIcd:2},
\begin{equation}
\label{IIe:64}
x_{n+1}-x_n\to 0.
\end{equation}
Altogether, \eqref{IIe:63} and \eqref{IIe:64} imply that
\begin{equation}
\label{IIe:65}
\|x_{l_n}-x_{k_n}\|\leq\sum_{m=k_n}^{k_n+M_j-2}\|x_{m+1}-x_n\|
\leq (M_j-1)\max\limits_{k_n\leq m\leq k_n+M_j-2}\|x_{m+1}-x_m\|
\to 0,
\end{equation}
and therefore
\begin{equation}
\label{IIe:66}
x_{l_n}\weakly\overline{x}.
\end{equation}
Now let $(\forall n\in\NN)$ $y_{l_n}\in T_{j,l_n}x_{l_n}$. 
We deduce from \eqref{IIe:63} and \eqref{IIe:64} that 
\begin{equation}
\label{IIe:67}
y_{l_n}-x_{l_n}\to 0.
\end{equation}
By invoking successively \eqref{IIe:59}, \eqref{IIe:66}, 
and \eqref{IIe:67}, we get $\overline{x}\in C_j$, 
and hence, $\overline{x}\in C$. Consequently, 
$\mathfrak{W}(x_n)_{n\in\NN}\subset C\cap\IDD$.

\ref{IIt:2ii}: 
Since $C$ is closed, the assertion follows from 
Proposition~\ref{IIpp:qb3}.
\end{proof}

\begin{remark}\
\begin{enumerate}
\item
In Theorem~\ref{IIt:2}, suppose that 
$(\forall n\in\NN)$ $f_n=\hat{f}$ and $\eta_n=0$. 
Then we recover the framework of \cite[Section~4.2]{IIBBC03}.
\item
In Theorem~\ref{IIt:2}, suppose that $\XX$ is a Hilbert 
space, that $f=\|\cdot\|^2/2$, and that $(\forall n\in\NN)$ 
$f_n\colon x\mapsto\pair{x}{U_nx}/2$, where $(U_n)_{n\in\NN}$ 
are operators in $\BP_{\alpha}(\XX)$ such that 
$\sup_{n\in\NN}\|U_n\|<\pinf$ and $(\forall n\in\NN)$ 
$(1+\eta_n)U_n\succcurlyeq U_{n+1}$. 
Then we recover the version of 
\cite[Theorem~5.1(i) and (iii)]{IICV13b} 
without errors and $(\forall n\in\NN)$ $\lambda_n=1$.
\end{enumerate}
\end{remark}

Our last result concerns a periodic projection method 
that uses different Bregman distances at each iteration.

\begin{corollary}
\label{IIc:1}
Let $\XX$ be a reflexive real Banach space, 
let $m$ be a strictly positive integer, 
let $(C_i)_{1\leq i\leq m}$ be a family of closed convex 
subsets of $\XX$ such that $C=\bigcap_{i=1}^mC_i\neq\emp$, 
let $f\in\Gamma_0(\XX)$ be G\^ateaux differentiable on 
$\IDD$ such that $C\cap\IDD\neq\emp$, 
let $(\eta_n)_{n\in\NN}\in\ell_+^1(\NN)$, 
let $\alpha\in\RPP$, and let $(f_n)_{n\in\NN}$ 
be Legendre functions in $\BP_{\alpha}(f)$ such that
\begin{equation}
\label{IIe:68}
(\forall n\in\NN)\quad(1+\eta_n)f_n\succcurlyeq f_{n+1}.
\end{equation}
Let $x_0\in\IDD$ and iterate
\begin{equation}
\label{IIal:3}
(\forall n\in\NN)\quad 
x_{n+1}=P^{f_n}_{C_{1+\operatorname{rem}(n,m)}}x_n,
\end{equation}
where $\operatorname{rem}(\cdot,m)$ is the remainder of 
the division by $m$. Suppose that $f$ satisfies 
Condition~\ref{IIcd:2} and that $(\forall x\in\IDD)$ 
$D^f(x,\cdot)$ is coercive. Then there exists 
$\overline{x}\in C$ such that the following hold:
\begin{enumerate}
\item
\label{IIc:1i}
Suppose that there exists $g\in\BF(f)$ 
such that, for every $n\in\NN$, $g\succcurlyeq f_n$, 
and, for every $x_1\in\XX$ and every $x_2\in\XX$,
\begin{equation}
\label{IIe:4s}
\begin{cases}
x_1\in\mathfrak{W}(x_n)_{n\in\NN}\cap C\\
x_2\in\mathfrak{W}(x_n)_{n\in\NN}\cap C\\
\big(\Pair{x_1-x_2}{\nabla f_n(x_n)}\big)_{n\in\NN}
\quad\text{converges}
\end{cases}
\Rightarrow\quad x_1=x_2.
\end{equation}
In addition, suppose that $\mathfrak{W}(x_n)_{n\in\NN}\subset\IDD$. 
Then $x_n\weakly\overline{x}$.
\item
\label{IIc:1ii}
Suppose that $f$ is Legendre, that $\varliminf D_C^f(x_n)=0$, 
and that there exists $\beta\in\RPP$ such that 
$(\forall n\in\NN)$ $\beta\hat{f}\succcurlyeq f_n$. 
Then $x_n\to\overline{x}$.
\end{enumerate}
\end{corollary}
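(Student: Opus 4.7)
\emph{Proof proposal.} The plan is to identify \eqref{IIal:3} as a particular instance of the general iteration \eqref{IIal:2} in Theorem~\ref{IIt:2} and invoke that theorem. Take $I=\{1,\ldots,m\}$ with its natural order, define the cyclic control map $\operatorname{i}\colon\NN\to I\colon n\mapsto 1+\operatorname{rem}(n,m)$, and, for every $i\in I$ and every $n\in\NN$, set $T_{i,n}=P^{f_n}_{C_i}$, so that \eqref{IIal:3} reads $x_{n+1}=T_{\operatorname{i}(n),n}x_n$. Then \ref{IIc:1i} and \ref{IIc:1ii} will follow respectively from Theorem~\ref{IIt:2}\ref{IIt:2i} and Theorem~\ref{IIt:2}\ref{IIt:2ii} once the structural hypotheses on $(T_{i,n})$ are checked.

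First I would verify the hypotheses of Theorem~\ref{IIt:2} that are intrinsic to the present setting. The cyclic control condition \eqref{IIe:56} holds with $M_j=m$ for every $j\in I$, since every block of $m$ consecutive iterations covers the full index set. Because $C\subset C_i$ and $C\cap\IDD\neq\emp$, one has $C_i\cap\IDD\neq\emp$ for every $i$; since each $f_n$ is Legendre with $\dom f_n=\IDD$, the Bregman-projector machinery of \cite[\S 3]{IIBBC03} applies to each $f_n$ and gives $T_{i,n}=P^{f_n}_{C_i}\in\mathfrak{B}(f_n)$, with $\Fix T_{i,n}=C_i\cap\IDD$. Consequently, $C_i\cap\Fix T_{i,n}=C_i\cap\IDD\neq\emp$ and $C_i\subset\overline{C_i\cap\IDD}=\overline{\Fix}T_{i,n}$, establishing \eqref{IIe:57}. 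The hypotheses \eqref{IIe:68}, coercivity of $D^f(x,\cdot)$, and Condition~\ref{IIcd:2} transfer verbatim.

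Next I would verify the demiclosedness-type condition \eqref{IIe:59}. Let $(l_n)_{n\in\NN}$ be strictly increasing in $\NN$, let $x\in\XX$, and let $j\in I$ be such that $x_{l_n}\weakly x$, $y_{l_n}=P^{f_{l_n}}_{C_j}x_{l_n}\in C_j$, $y_{l_n}-x_{l_n}\to 0$, and $j=\operatorname{i}(l_n)$ for all $n\in\NN$. Combining the weak convergence of $(x_{l_n})_{n\in\NN}$ with the strong convergence $y_{l_n}-x_{l_n}\to 0$ yields $y_{l_n}\weakly x$; since $C_j$ is closed and convex, it is weakly closed, so $x\in C_j$. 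This delivers \eqref{IIe:59} regardless of the choice of $j$ and $(l_n)_{n\in\NN}$.

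With these pieces assembled, \ref{IIc:1i} follows directly from Theorem~\ref{IIt:2}\ref{IIt:2i} (the additional hypotheses on $g$, on $\mathfrak{W}(x_n)_{n\in\NN}$, and the variable-gradient condition \eqref{IIe:4s} are those of \eqref{IIe:4q}), and \ref{IIc:1ii} follows from Theorem~\ref{IIt:2}\ref{IIt:2ii} under the Legendre, $\varliminf D_C^f(x_n)=0$, and $\beta\hat{f}\succcurlyeq f_n$ hypotheses. The only point requiring some care, and the one I expect to be the main obstacle, is the uniform-in-$n$ verification that $P^{f_n}_{C_i}\in\mathfrak{B}(f_n)$ together with the fixed-point and closure identities of \eqref{IIe:57}; this is handled by applying \cite[Proposition~3.22 and Theorem~3.34]{IIBBC03} separately to each Legendre function $f_n$, which is legitimate because $\dom f_n=\IDD$ is independent of $n$ and the relevant structural results of \cite{IIBBC03} hold pointwise in $n$.
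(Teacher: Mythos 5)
Your strategy coincides with the paper's: specialize Theorem~\ref{IIt:2} to the cyclic control $\operatorname{i}(n)=1+\operatorname{rem}(n,m)$ with $M_j=m$, set $T_{i,n}=P^{f_n}_{C_i}$, and check the demiclosedness condition \eqref{IIe:59} via weak closedness of closed convex sets. However, your verification of \eqref{IIe:57} contains a genuine error: you assert $C_i\subset\overline{C_i\cap\IDD}=\overline{\Fix}T_{i,n}$. The equality is correct (by \cite[Lemma~3.2]{IIBBC03}, $\overline{C_i\cap\IDD}=C_i\cap\overline{\dom}f$ once $C_i\cap\IDD\neq\emp$), but the inclusion is false in general, because the corollary nowhere assumes $C_i\subset\overline{\dom}f$. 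Concretely, take $\XX=\RR$, $f$ the Boltzmann--Shannon entropy (so $\dom f=\RP$ and $\IDD=\RPP$), and $C_i=[-1,1]$: then $\overline{C_i\cap\IDD}=[0,1]$, which does not contain $C_i$. Thus, with the family $(C_i)_{1\leq i\leq m}$ itself, hypothesis \eqref{IIe:57} of Theorem~\ref{IIt:2} fails, and your invocation of the theorem is not legitimate as it stands.

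The paper sidesteps exactly this obstacle by applying Theorem~\ref{IIt:2} not to $(C_i)_{1\leq i\leq m}$ but to the shrunken family $(C_i\cap\overline{\dom}f)_{1\leq i\leq m}$, for which \eqref{IIe:57} holds with equality: $C_i\cap\overline{\dom}f=\overline{C_i\cap\IDD}=\overline{\Fix}T_{i,n}$ and $(C_i\cap\overline{\dom}f)\cap\Fix T_{i,n}=C_i\cap\IDD\supset C\cap\IDD\neq\emp$. This substitution costs two small extra checks that your write-up omits but which are immediate. First, in \eqref{IIe:59} one must conclude $x\in C_j\cap\overline{\dom}f$ rather than merely $x\in C_j$; this is automatic since $(x_n)_{n\in\NN}$ lies in $\IDD$ and $\overline{\dom}f$ is closed and convex, hence weakly closed, so every weak sequential cluster point lies in $\overline{\dom}f$ (for the same reason, your hypothesis \eqref{IIe:4s}, stated with $C$, implies condition \eqref{IIe:4q} for $C\cap\overline{\dom}f$). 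Second, in part~\ref{IIc:1ii} the hypothesis $\varliminf D^f_C(x_n)=0$ must be converted: since $D^f(x,x_n)=\pinf$ whenever $x\notin\dom f$, one has $D^f_{C\cap\overline{\dom}f}(x_n)\leq\inf_{x\in C\cap\dom f}D^f(x,x_n)=D^f_C(x_n)$, whence $\varliminf D^f_{C\cap\overline{\dom}f}(x_n)=0$ as required. With these repairs, the conclusion of Theorem~\ref{IIt:2} furnishes $\overline{x}\in C\cap\overline{\dom}f\subset C$, and the rest of your argument goes through unchanged.
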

\begin{proof}
First, we see that the function 
$\operatorname{i}\colon\NN\to\{1,\ldots,m\}\colon 
n\mapsto 1+\operatorname{rem}(n,m)$ satisfies \eqref{IIe:56}, 
where $(\forall j\in\{1,\ldots,m\})$ $M_j=m$. Now set 
\begin{equation}
(\forall i\in\{1,\ldots,m\})(\forall n\in\NN)\quad 
T_{i,n}=P_{C_i}^{f_n}.
\end{equation}
Then, by \cite[Theorem~3.34]{IIBBC03}, for every $n\in\NN$ 
and every $i\in\{1,\ldots,m\}$, we have
\begin{equation}
T_{i,n}\in\mathfrak{B}(f_n)\quad\text{and}\quad 
C_i\cap\overline{\dom}f\cap\Fix T_{i,n}=C_i\cap\IDD
\supset C\cap\IDD\neq\emp.
\end{equation}
In addition, it follows from \cite[Lemma~3.2]{IIBBC03} that
\begin{equation}
(\forall n\in\NN)(\forall i\in\{1,\ldots,m\})\quad 
C_i\cap\overline{\dom f}=\overline{C_i\cap\IDD}
=\overline{C_i\cap\IDD_n}=\overline{\Fix}T_{i,n}.
\end{equation}
Therefore, \eqref{IIal:3} is a particular case of 
\eqref{IIal:2}. We shall actually apply Proposition~\ref{IIt:2} 
with the family $(C_i\cap\overline{\dom}f)_{1\leq i\leq m}$.

\ref{IIc:1i}: 
Let us fix $j\in\{1,\ldots,m\}$ and suppose that 
\begin{equation}
x_{l_n}\weakly x,\quad T_{j,l_n}x_{l_n}-x_{l_n}\to 0,
\quad\text{and}\quad (\forall n\in\NN)\quad 
j=\operatorname{i}(l_n).
\end{equation} 
Then $C_j\ni P_{C_j}^{f_{l_n}}x_{l_n}=T_{j,l_n}x_{l_n}
\weakly x$, and hence, $x\in C_j$ since $C_j$ is weakly 
closed \cite[Corollary~4.5]{IISimo08}. Moreover, 
since $(x_n)_{n\in\NN}$ is in $\IDD$, $x\in\overline{\dom}f$ 
and hence $x\in C_j\cap\overline{\dom}f$. 
This shows that \eqref{IIe:59} is satisfied. Consequently, 
the assertion follows from Proposition~\ref{IIt:2}\ref{IIt:2i}.

\ref{IIc:1ii}: 
We have
\begin{equation}
(\forall n\in\NN)\quad\inf_{x\in C\cap\overline{\dom}f}D^f(x,x_n)
\leq\inf_{x\in C\cap\dom f}D^f(x,x_n)=D_C^f(x_n),
\end{equation}
and hence, $\varliminf D_{C\cap\overline{\dom}f}(x_n)=0$. 
The claim therefore follows from 
Proposition~\ref{IIt:2}\ref{IIt:2ii}.
\end{proof}
\vskip 0.5cm
\noindent{\bf Acknowledgment.} 
I would like to thank my doctoral advisor Professor Patrick L. 
Combettes for bringing this problem to my attention and for 
helpful discussions.

\end{document}